\newtheorem{thm}{Theorem}
\newtheorem{lem}[thm]{Lemma}
\newtheorem{cor}[thm]{Corollary}
\newtheorem{definition}{Definition}
\theoremstyle{definition}
\newtheorem{remark}{Remark}
\xpatchcmd{\proof}{\itshape}{\normalfont\proofnameformat}{}{}
\newcommand{\proofnameformat}{}
\begin{document}

\renewcommand{\proofnameformat}{\bfseries}

\begin{center}
{\Large\textbf{Riesz Energy, $L^2$ Discrepancy, and Optimal Transport of Determinantal Point Processes on the Sphere and the Flat Torus}}

\vspace{10mm}

\textbf{Bence Borda$^1$, Peter Grabner$^1$, and Ryan W. Matzke$^{1,2}$}

\vspace{3mm}

{\footnotesize $^1$ Institute of Analysis and Number Theory, Graz University of Technology

Steyrergasse 30, 8010 Graz, Austria

\vspace{3mm}

$^2$Vanderbilt University, Department of Mathematics

1326 Stevenson Center, Nashville, TN 37240, USA\\

Email: \texttt{borda@math.tugraz.at}, \texttt{peter.grabner@tugraz.at}, \texttt{ryan.w.matzke@vanderbilt.edu}}

\vspace{5mm}

{\footnotesize \textbf{Keywords:} harmonic ensemble, spherical ensemble, spherical cap discrepancy, $L^2$ discrepancy, Wasserstein metric}

{\footnotesize \textbf{Mathematics Subject Classification (2020):} 60G55, 31C12, 49Q22, 11K38}
\end{center}

\vspace{5mm}

\begin{abstract}
Determinantal point processes exhibit an inherent repulsive behavior, thus providing examples of very evenly distributed point sets on manifolds. In this paper, we study the so-called harmonic ensemble, defined in terms of Laplace eigenfunctions on the sphere $\mathbb{S}^d$ and the flat torus $\mathbb{T}^d$, and the so-called spherical ensemble on $\mathbb{S}^2$, which originates in random matrix theory. We extend results of Beltr\'an, Marzo and Ortega-Cerd\`a on the Riesz $s$-energy of the harmonic ensemble to the nonsingular regime $s<0$, and as a corollary find the expected value of the spherical cap $L^2$ discrepancy via the Stolarsky invariance principle. We find the expected value of the $L^2$ discrepancy with respect to axis-parallel boxes and Euclidean balls of the harmonic ensemble on $\mathbb{T}^d$. We also show that the spherical ensemble and the harmonic ensemble on $\mathbb{S}^2$ and $\mathbb{T}^2$ with $N$ points attain the optimal rate $N^{-1/2}$ in expectation in the Wasserstein metric $W_2$, in contrast to i.i.d.\ random points, which are known to lose a factor of $(\log N)^{1/2}$.
\end{abstract}

\section{Introduction}

Point processes with built-in repulsion are classical models for fermionic particle systems in quantum mechanics. A similar phenomenon of repulsion occurs for the eigenvalues of random matrices and the zeros of random polynomials (see, e.g., \cite[Chapter 4]{HKPV1}). Many repulsive systems fall into the category of determinantal point processes: these are simple point processes whose joint intensities are given by the determinant of a suitable positive semidefinite kernel. Such systems provide a natural way of distributing points on a compact manifold, with the built-in repulsion ensuring a more even point distribution than, say, independent and identically distributed (i.i.d.) random variables. In Section \ref{sec:Results}, we give the precise definitions of all notions appearing here in the Introduction.

The main goal of this paper is to demonstrate the repulsive properties of the so-called \emph{harmonic ensemble} on the unit sphere $\mathbb{S}^d$ and the flat torus $\mathbb{T}^d$, and the so-called \emph{spherical ensemble} on $\mathbb{S}^2$. In particular, these systems are closer to uniformity in terms of the nonsingular Riesz energy, in $L^2$ discrepancy and in the Wasserstein metric $W_2$ than uniformly distributed i.i.d.\ points.

The harmonic ensemble on a compact manifold $M$ is the determinantal point process defined by the reproducing kernel of a finite dimensional subspace of $L^2 (M)$, typically that spanned by the Laplace eigenfunctions corresponding to eigenvalues up to a certain threshold. It was first introduced by Beltr\'an, Marzo, and Ortega-Cerd\`a on the sphere $\mathbb{S}^d$ in \cite{BMO}, where they established the precise asymptotics of its expected singular Riesz and logarithmic energies, and studied its separation distance and linear statistics. The energy results have since been extended to projective spaces \cite{ADGMS} and to $\mathrm{SO}(3)$ \cite{BF}, where the Green energy was also found. The harmonic ensemble has also been considered on the flat torus in \cite{MOC}, where its expected periodic Riesz energy for $0<s<d$ was investigated. The harmonic ensemble on both the sphere \cite{BGKZ} and the torus \cite{ST} has been shown to be hyperuniform.

The spherical ensemble has its origins in random matrix theory. Krishnapur \cite{KR1,KR2} showed that the complex eigenvalues of $A^{-1}B$, where $A$ and $B$ are independent $N \times N$ matrices with i.i.d. standard complex Gaussian entries, form a determinantal point process on $\mathbb{C}$ with respect to a certain background measure. The spherical ensemble is then obtained by the stereographic projection of these eigenvalues onto the sphere $\mathbb{S}^2$, and turns out to be a determinantal point process itself. Its precise distribution properties were investigated in \cite{AZ}, in particular both the singular and nonsingular Riesz energy and the $L^2$ discrepancy were shown to be of optimal order among all point sets of the same size. The spherical ensemble was generalized to complex projective spaces and even-dimensional spheres in \cite{BE2,BE1}.

For the empirical spectral measure of further random matrix models, we refer to the monograph \cite{BS}. In particular, estimating the convergence rate in the Kolmogorov and the Wasserstein metric on the circle (for random unitary matrices), on the real line (for random Hermitian matrices) and on the complex plane (for random non-Hermitian matrices) is a classical problem. The latter metric originates in the theory of optimal transport, and has become a powerful tool in geometric analysis, see \cite{VI} for an introduction. We follow the approach of G\"otze and Jalowy \cite{GJ,JA} to deduce sharp rates in the Wasserstein metric by using a suitable smoothing inequality.

The main contributions of the present paper, discussed in more detail in Section \ref{sec:Results}, are the following.
\begin{itemize}
\item We find the expected value of the nonsingular Riesz energy of the harmonic ensemble on $\mathbb{S}^d$ (Theorem \ref{thm:riesztheorem}).

\item We find the expected value of the $L^2$ discrepancy of the harmonic ensemble on $\mathbb{S}^d$ via the Stolarsky invariance principle (Corollary \ref{cor:capL2corollary Sd}), and show that it has optimal order up to a factor of $(\log N)^{1/2}$ among all point sets of the same size.

\item We find the expected value of the $L^2$ discrepancy with respect to axis-parallel boxes and Euclidean balls of the harmonic ensemble on $\mathbb{T}^d$ via Fourier analysis (Theorems \ref{L2torustheorem} and \ref{ballL2torustheorem}). The $L^2$ discrepancy with respect to Euclidean balls has optimal order up to a factor of $(\log N)^{1/2}$ among all point sets of the same size.

\item We show that the harmonic ensemble on $\mathbb{S}^2$ and $\mathbb{T}^2$, and the spherical ensemble on $\mathbb{S}^2$ have distance $O(N^{-1/2})$ from uniformity in expectation in the Wasserstein metric $W_2$, which is optimal among all point sets of the same size (Theorem \ref{thm: 2dimWassersteintheorem}).
\end{itemize}

The proof of all our results rely on the explicit formulas for the Laplace eigenfunctions on $\mathbb{S}^d$ and $\mathbb{T}^d$. We mention without further details that similarly to \cite{ADGMS}, our approach could be extended to other two-point homogeneous spaces, on which the Laplace eigenfunctions are also explicitly known. We only comment on the complex projective space $\mathbb{CP}^{d}$, in particular on the expected value of the $L^2$ discrepancy of the projective ensemble (Corollary \ref{cor:capL2corollary CP}).

\section{Results}\label{sec:Results}

\subsection{Determinantal point processes}

Throughout the paper, $M$ denotes a $d$-dimensional compact, connected, smooth Riemannian manifold without boundary, with Riemannian volume measure $\mathrm{Vol}$. Let $K: M \times M \to \mathbb{C}$ be a continuous, Hermitian, positive semidefinite kernel; that is, for any $x_1, x_2, \ldots, x_k \in M$, the $k \times k$ matrix $(K(x_i,x_j))_{i,j=1}^k$ is self-adjoint and positive semidefinite. Assume further that the operator $L_K: L^2(M) \rightarrow L^2(M)$ defined by
\begin{equation*}
L_K(f) (x) := \int_{M} K(x,y) f(y) \,\mathrm{d} \mathrm{Vol}(y)
\end{equation*}
has all eigenvalues in $[0,1]$. Let $(\lambda_n, \psi_n)$ be the sequence of eigenvalues and normalized eigenfunctions. Then Mercer's Theorem tells us that
\begin{equation*}
K(x,y) = \sum_{n=1}^{\infty} \lambda_n \psi_n(x) \overline{\psi_n(y)},
\end{equation*}
where convergence is absolute and uniform. This immediately implies that
\begin{equation*}
\mathrm{Trace}(L_K) = \sum_{n=1}^{\infty} \lambda_n < \infty,
\end{equation*}
meaning $L_K$ is trace class.

A simple point process $\mathscr{X}_K$ on $M$ is called a determinantal point process with kernel $K$ if the joint intensities (correlation functions) with respect to $\mathrm{Vol}$ are $\rho_k (x_1, x_2, \ldots, x_k) = \det (K(x_i,x_j))_{i,j=1}^k$, $k=1,2,\ldots$. For every eigenfunction $\psi_n$, the process produces a point with probability $\lambda_n$, which implies that the expected number of points is the trace of $L_K$. If $\lambda_1 = \cdots = \lambda_N = 1$, and $\lambda_n = 0$ for all $n > N$, then $L_K$ is a projection to the subspace spanned by $\psi_1, \ldots, \psi_N$. In this case we call $K$ a projection kernel, and the corresponding determinantal point process  samples exactly $N$ points, $\mathscr{X}_K = \{X_1, \ldots, X_N\}$. For the rest of the paper, we will only deal with projection kernels. In particular, for any Borel measurable function $f: M \to \mathbb{C}$,
\begin{equation}\label{expectationformula}
\mathbb{E} \sum_{n=1}^{N} f(X_n) = \int_M K(x,x) f(x) \, \mathrm{d} \mathrm{Vol} (x) ,
\end{equation}
and
\begin{equation}\label{varianceformula}
\begin{split} \mathbb{E} \left| \sum_{n=1}^N f(X_n) \right|^2 = &\mathbb{E} \sum_{n=1}^N |f(X_n)|^2 + \mathbb{E} \sum_{\substack{n,m=1 \\ n \neq m}}^N f(X_n) \overline{f(X_m)} \\ = &\int_M K(x,x) |f(x)|^2 \, \mathrm{d} \mathrm{Vol} (x) \\ &+ \int_M \int_M (K(x,x) K(y,y) - |K(x,y)|^2) f(x) \overline{f(y)} \, \mathrm{d} \mathrm{Vol}(x) \, \mathrm{d} \mathrm{Vol} (y) , \end{split}
\end{equation}
provided that the integrals exist. For the general theory of determinantal point processes, we refer to \cite{HKPV}.

In this paper, we will work on the standard Euclidean unit sphere $M=\mathbb{S}^d \subset \mathbb{R}^{d+1}$ and the flat torus $M= \mathbb{T}^d = \mathbb{R}^d/\mathbb{Z}^d$. The volumes are normalized as $\mathrm{Vol}(\mathbb{S}^d)=2 \pi^{\frac{d+1}{2}}/\Gamma \left( \frac{d+1}{2} \right)$ and $\mathrm{Vol}(\mathbb{T}^d)=1$. Recall the orthogonal decomposition $L^2 (\mathbb{S}^d, \mathrm{Vol}) = \oplus_{\ell =0}^{\infty} H_{\ell}$, where $H_{\ell}$ is the space of spherical harmonics of degree $\ell$. Let $Y_{\ell}^m$, $1 \le m \le \mathrm{dim} \, H_{\ell}=\frac{2 \ell +d-1}{\ell +d-1} \binom{\ell +d-1}{d-1}$ be an (arbitrary) orthonormal basis in $H_{\ell}$. We refer to \cite{SW} for a general overview of spherical harmonics.
\begin{definition} \hspace{1mm}
\begin{enumerate}
\item[(i)] Let $L \in \mathbb{N}$. The harmonic ensemble on $\mathbb{S}^d$ is the determinantal point process $X=X(L,\mathbb{S}^d)=\{ X_1, X_2, \ldots, X_N \}$ with kernel $K: \mathbb{S}^d \times \mathbb{S}^d \to \mathbb{C}$,
\begin{equation*}
K(x,y) = \sum_{\ell =0}^{L} \sum_{m=1}^{\mathrm{dim}(H_{\ell})} Y_{\ell}^m (x) \overline{Y_{\ell}^m (y)} .
\end{equation*}
\item[(ii)] Let $T \in \mathbb{N}$. The harmonic ensemble on $\mathbb{T}^d$ is the determinantal point process $X=X(T,\mathbb{T}^d)=\{ X_1, X_2, \ldots, X_N \}$ with kernel $K: \mathbb{T}^d \times \mathbb{T}^d \to \mathbb{C}$,
\begin{equation*}
K(x,y) = \sum_{\substack{k \in \mathbb{Z}^d \\ \| k \|_{\infty} \le T}} e^{2 \pi i \langle k,x-y \rangle} . 
\end{equation*} 
\end{enumerate}
\end{definition}
Note that $K(x,y)$ on $\mathbb{S}^d$, resp.\ $\mathbb{T}^d$, is the reproducing kernel of the finite dimensional subspace $\oplus_{\ell =0}^L H_{\ell} \le L^2 (\mathbb{S}^d, \mathrm{Vol})$, resp.\ $\mathrm{Span} \, \{ e^{2 \pi i \langle k,x \rangle} \, : \, \| k \|_{\infty} \le T \} \le L^2 (\mathbb{T}^d, \mathrm{Vol})$. The kernel is rotation invariant on $\mathbb{S}^d$, resp.\ translation invariant on $\mathbb{T}^d$. By the above discussion, the number of points in     the harmonic ensemble is deterministic, and the number of points is given by 
\begin{equation*}
N = \begin{cases}
 \frac{2L+d}{d} \binom{L+d-1}{d-1} & \textrm{if } M=\mathbb{S}^d, \\ (2T+1)^d & \textrm{if } M=\mathbb{T}^d .  \end{cases}
\end{equation*}

\begin{remark}\label{harmoniconM} More generally, we could define the harmonic ensemble on a compact manifold $M$ as the determinantal point process $X=\{ X_1, X_2, \ldots, X_N \}$ with kernel $K: M \times M \to \mathbb{C}$, $K(x,y) = \sum_{k=0}^{N-1} \phi_k (x) \overline{\phi_k (y)}$, where $\phi_k$, $k \ge 0$ is an orthonormal basis in $L^2(M, \mathrm{Vol})$ that consists of eigenfunctions of the Laplace--Beltrami operator on $M$, in increasing order of the eigenvalues.
\end{remark}

\begin{definition} Let $N \in \mathbb{N}$, and consider two independent $N \times N$ matrices $A,B$ whose entries are i.i.d.\ mean zero Gaussian random variables with density $\pi^{-1} e^{-|z|^2}$ on $\mathbb{C}$. The spherical ensemble on $\mathbb{S}^2$ is the stereographic projection of the $N$ complex eigenvalues of the matrix $A^{-1}B$ to $\mathbb{S}^2$.
\end{definition}

The complex eigenvalues of $A^{-1}B$ form a determinantal point process on $\mathbb{C}$ with kernel $K(z,w) = (1+z \overline{w})^{N-1}$ with respect to the measure $\mathrm{d}\mu (z) = \frac{N}{\pi (1+|z|^2)^{N+1}} \mathrm{d} z$ \cite{KR2,KR1}. After stereographic projection, the spherical ensemble also becomes a determinantal point process on $\mathbb{S}^2$ \cite{BE1,BGKZ}, with kernel $K: \mathbb{S}^2 \times \mathbb{S}^2 \to \mathbb{C}$,
\begin{equation*}
K(x,y) = \frac{N}{\pi 2^{N+1}} \left( \frac{1+\langle x,y \rangle -x_3-y_3 + i (x_2 y_1 - x_1 y_2)}{\sqrt{(1-x_3)(1-y_3)}} \right)^{N-1}.
\end{equation*}
In particular,
\begin{equation}\label{sphericalkernel}
K(x,x)=\frac{N}{4 \pi} \qquad \textrm{and} \qquad |K(x,y)|^2 = \frac{N^2}{16 \pi^2} \left( \frac{1+\langle x,y \rangle}{2} \right)^{N-1} .
\end{equation}
Obviously, the number of points in the spherical ensemble is also deterministic.

\subsection{Riesz energy on the sphere}

The continuous Riesz $s$-energy of a Borel probability measure $\mu$ on $\mathbb{S}^d$ is defined as
\[ E_{s}(\mu) = \left\{ \begin{array}{ll} \displaystyle{\int_{\mathbb{S}^d} \int_{\mathbb{S}^d} \frac{1}{|x-y|^s} \, \mathrm{d}\mu(x) \, \mathrm{d}\mu(y)} & \textrm{if } s \neq 0, \\ \displaystyle{\int_{\mathbb{S}^d} \int_{\mathbb{S}^d} \log \frac{1}{|x-y|} \, \mathrm{d}\mu(x) \, \mathrm{d}\mu(y)} & \textrm{if } s=0, \end{array} \right. \]
where $| \cdot |$ denotes the Euclidean norm. For the normalized Riemannian volume, $E_s(\mathrm{Vol}/\mathrm{Vol}(\mathbb{S}^d))$ is finite if and only if $s<d$, and we set
\[ \mathcal{I}_{s,d} = E_s(\mathrm{Vol}/\mathrm{Vol}(\mathbb{S}^d)) = \left\{ \begin{array}{ll} 2^{d-s-1} \frac{\Gamma \left( \frac{d+1}{2} \right) \Gamma \left( \frac{d-s}{2} \right)}{\sqrt{\pi} \Gamma \left( d-\frac{s}{2} \right)} & \textrm{if } s<d, \,\, s \neq 0, \\ \frac{\psi_0 (d) - \psi_0 (d/2)}{2} - \log 2 & \textrm{if } s=0, \end{array} \right. \]
where $\psi_0 (x) = \Gamma'(x) / \Gamma (x)$ is the digamma function.

The discrete Riesz $s$-energy of a finite point set $A_N = \{ a_1, a_2, \ldots, a_N \} \subset \mathbb{S}^d$ is similarly defined as
\[ E_s (A_N) = \sum_{\substack{n,m =1 \\ n \neq m}}^N \frac{1}{|a_n-a_m|^s} \quad (s \neq 0) \qquad \textrm{and} \qquad E_0 (A_N) = \sum_{\substack{n,m =1 \\ n \neq m}}^N \log \frac{1}{|a_n-a_m|} . \]
Note that in the nonsingular regime $s<0$, $E_s(A_N)$ is $N^2$ times the continuous Riesz $s$-energy of the corresponding empirical measure $\mu=N^{-1} \sum_{n=1}^N \delta_{a_n}$.

The optimization of the discrete and continuous Riesz energies on the sphere has been well-studied \cite{BHS}. Bj\"{o}rck \cite{Bjo} showed that
\begin{itemize}
\item if $-2 < s < 0$, then $E_{s}(\mu)$ is uniquely maximized by the normalized volume measure;
\item if $s = -2$, then the maximizers of $E_{s}(\mu)$ are exactly the measures with center of mass at the origin, i.e. $\int_{\mathbb{S}^d} x \, \mathrm{d}\mu(x) = 0$;
\item if $s < -2$, then the maximizers of $E_{s}(\mu)$ are exactly the measures of the form $\frac{1}{2}( \delta_p + \delta_{-p})$ for any $p \in \mathbb{S}^d$.
\end{itemize}

This immediately yields maximizers of the discrete Riesz energies $E_s (A_N)$ for $s \leq -2$. In the case $s = -2$, any point set with center of mass at the origin is optimal, e.g.\ evenly spaced points on some great circle. In the case when $s < -2$ and $N$ is even, taking $N/2$ points at a pole and the rest at the opposite pole is optimal.

In the case $-2 < s < 0$, we have (see \cite[Theorem 1.1]{BraHS}, \cite[Theorem 2(a)]{Wag1}, \cite[Theorem B]{Wag2})
\begin{equation}\label{eq:Max Riesz Energy Asymptotics}
\mathcal{I}_{s,d} N^2 - c_{s,d} N^{1+\frac{s}{d}} \le \max_{|A_N| = N, \, A_N \subset \mathbb{S}^d} E_{s}(A_N) \le \mathcal{I}_{s,d} N^2 -  c'_{s,d} N^{1+\frac{s}{d}}
\end{equation}
with some positive constants $c_{s,d}$, $c'_{s,d}$. The optimal point configurations $A_N$ are not known in general, however it is known that they are uniformly distributed as $N \rightarrow \infty$ \cite[Theorem 6.1.7]{BHS}.

Consider now the Riesz $s$-energy $E_s (X)$ of the harmonic ensemble
$X=X(L,\mathbb{S}^d)=\{ X_1, X_2, \ldots, X_N \}$. Beltr\'an, Marzo, and
Ortega-Cerd\`a \cite{BMO} showed that its expected value is finite if and only
if $s<d+2$, and gave an explicit formula in the singular regime $0 \le s \le
d$. The fact that the expectation of the Riesz energy under such determinantal
point processes converges for $s<d+2$ (as opposed to $s<d$ for i.i.d.\ points)
can be explained by the vanishing to the second order of the joint intensity $\rho_2(x,y)$ for $x=y$.

In particular, for $0<s<d$ they showed that
\begin{equation*} \mathbb{E} E_s (X) = \mathcal{I}_{s,d} N^2 - (1+o(1)) C_{s,d} N^{1+\frac{s}{d}} 
\end{equation*}
with
\begin{equation}\label{Csd}
\begin{split} C_{s,d} &= 2^{s-\frac{s}{d}} \mathcal{I}_{s,d} \frac{d}{(d!)^{1-\frac{s}{d}}} \cdot \frac{\Gamma \left( 1 + \frac{d}{2} \right) \Gamma \left( \frac{1+s}{2} \right) \Gamma \left( d - \frac{s}{2} \right)}{\sqrt{\pi} \Gamma \left( 1 + \frac{s}{2} \right) \Gamma \left( 1+\frac{d+s}{2} \right)} \\ &= \frac{(d!)^{\frac{s}{d}} \Gamma \left( \frac{1+s}{2} \right) \Gamma \left( \frac{d-s}{2} \right)}{\sqrt{\pi} 2^{\frac{s}{d}} \left( 1+\frac{s}{d} \right) \Gamma \left( 1+\frac{s}{2} \right) \Gamma \left( \frac{d+s}{2} \right)} . \end{split}
\end{equation}
In this paper, we extend their result to the nonsingular regime $s<0$.
\begin{thm}\label{thm:riesztheorem}
The harmonic ensemble $X=X(L,\mathbb{S}^d)=\{ X_1, X_2, \ldots, X_N \}$ satisfies
\[ \mathbb{E} E_s (X) = \mathcal{I}_{s,d} N^2- \left\{ \begin{array}{ll} C_{s,d} N^{1+\frac{s}{d}} +O \left( N^{1-\frac{1}{d}} \right) & \textrm{if } -1<s<0, \\ C_{-1,d} N^{1-\frac{1}{d}} \log N + \kappa_d N^{1-\frac{1}{d}} + O \left( N^{1-\frac{2}{d}} \log N \right) & \textrm{if } s=-1, \\ C_{s,d} N^{1-\frac{1}{d}} + O \left( N^{1+\frac{\max\{ s, -2 \}}{d}} \right) & \textrm{if } s<-1 \end{array} \right. \]
with implied constants depending only on $s$ and $d$, where
\begin{equation*} C_{s,d} = \left\{ \begin{array}{ll} \textrm{as in \eqref{Csd}} & \textrm{if } -1<s<0, \\ \frac{2^{\frac{1}{d}}}{\pi (d!)^{\frac{1}{d}}} & \textrm{if } s=-1, \\ \frac{d 2^{\frac{1}{d}-s-2} \Gamma \left( -\frac{1+s}{2} \right)}{\sqrt{\pi} (d!)^\frac{1}{d} \Gamma \left( -\frac{s}{2} \right)} & \textrm{if } s<-1 , \end{array} \right. 
\end{equation*}
and
\begin{equation*} 
\kappa_d = \frac{2^{\frac{1}{d}}}{\pi (d!)^{\frac{1}{d}}} \left( \log \frac{d!}{2} + (4+(-1)^d) d \log 2 -2d \sum_{j=1}^{\lfloor d/2 \rfloor} \frac{1}{d-2j+1} \right) . 
\end{equation*}
\end{thm}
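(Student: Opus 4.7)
My plan is to extend the approach of Beltrán, Marzo, and Ortega-Cerdà \cite{BMO}, who treated the singular range $0<s<d$. Starting from the joint intensity formula,
\[ \mathbb{E} E_s(X) = \int_{\mathbb{S}^d \times \mathbb{S}^d} \frac{K(x,x)K(y,y) - |K(x,y)|^2}{|x-y|^s}\, \mathrm{d}\mathrm{Vol}(x)\, \mathrm{d}\mathrm{Vol}(y) = \mathcal{I}_{s,d} N^2 - J(s,L), \]
where by rotation invariance and the addition formula for spherical harmonics,
\[ J(s,L) = \mathrm{Vol}(\mathbb{S}^d)\,\mathrm{Vol}(\mathbb{S}^{d-1}) \int_0^\pi (2\sin(\theta/2))^{-s}\, |\widetilde{K}_L(\cos\theta)|^2\, \sin^{d-1}\theta\, \mathrm{d}\theta \]
with $\widetilde{K}_L$ the rotation-invariant profile of the kernel. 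Via the Christoffel--Darboux identity for Gegenbauer polynomials, $\widetilde{K}_L(\cos\theta)$ is a constant multiple of a single Jacobi polynomial $P_L^{(d/2,\,d/2-1)}(\cos\theta)$, so the task reduces to sharp asymptotics of a weighted integral of a squared Jacobi polynomial against the bounded (for $s<0$) weight $(2\sin(\theta/2))^{-s}\sin^{d-1}\theta$.

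I would then analyze this integral on two scales. On the \emph{microscopic} scale $\theta\in[0,C/L]$, the Mehler--Heine formula gives $\widetilde{K}_L(\cos(\phi/L))\sim c_d L^d F_d(\phi)$ with $F_d$ built from the Bessel function $J_{d/2}$, so after the substitution $\theta=\phi/L$ the microscopic contribution to $J(s,L)$ is $L^{d+s}$ times a Weber--Schafheitlin-type integral $\int_0^{\pi L} F_d(\phi)^2 \phi^{d-1-s}\,\mathrm{d}\phi$. On the \emph{macroscopic} scale $\theta\in[C/L,\pi-C/L]$, Darboux's asymptotic yields $|\widetilde{K}_L(\cos\theta)|^2\sim L^{d-1} h_d(\theta)(1+\cos(2L\theta+2\gamma_d))$, and after averaging out the oscillation (with an integration-by-parts argument giving a lower-order remainder) the macroscopic contribution scales as $L^{d-1}$. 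The trichotomy in the theorem now arises from comparing these two scales, remembering $N\asymp L^d$: since the Weber--Schafheitlin integrand behaves like $\phi^{-s-2}$ at infinity, the integral converges for $s>-1$ so the microscopic scale dominates and produces the term $C_{s,d} N^{1+s/d}$ whose coefficient is computed in closed form by the Weber--Schafheitlin formula; the integral diverges at infinity for $s<-1$ so the macroscopic scale dominates and produces $C_{s,d} N^{1-1/d}$ whose coefficient is an explicit beta-type integral $\int_0^\pi (2\sin(\theta/2))^{-s} h_d(\theta)\sin^{d-1}\theta\,\mathrm{d}\theta$; at the critical value $s=-1$ the integral has a logarithmic divergence $\int^{\pi L} \mathrm{d}\phi/\phi \sim \log L$, producing the $\log N$ factor.

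I expect the main obstacle to be the critical case $s=-1$, where both scales contribute at the same order $N^{1-1/d}$ and must be carefully matched to extract the explicit subleading constant $\kappa_d$. This requires retaining next-to-leading terms in both the Mehler--Heine and Darboux expansions, together with exact evaluations of Weber--Schafheitlin integrals through Gamma-function identities; the sum $\sum_{j=1}^{\lfloor d/2\rfloor}1/(d-2j+1)$ in the formula for $\kappa_d$ is expected to emerge from a telescoping digamma identity. Uniform error control also requires Jacobi polynomial estimates near both endpoints $\theta=0$ and $\theta=\pi$, the latter needing extra care because $|x-y|^{-s}$ does not vanish there; the remainders $O(N^{1-1/d})$, $O(N^{1-2/d}\log N)$, and $O(N^{1+\max\{s,-2\}/d})$ appearing in the three cases then reflect the next-order corrections in the Jacobi asymptotic and, for $s\le -2$, the onset of a secondary balance between the two scales.
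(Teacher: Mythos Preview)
Your proposal is a genuinely different route from the paper's, and one that would plausibly succeed, but the paper's argument is considerably more algebraic. Instead of splitting the integral into microscopic and macroscopic regions and invoking Mehler--Heine and Darboux asymptotics, the paper applies the Jacobi connection formula to expand $P_L^{(d/2,(d-2)/2)}$ in the basis $\{P_k^{((d-2-s)/2,(d-2)/2)}\}_{k=0}^L$; the orthogonality relation then evaluates the integral \emph{exactly} as a finite sum $F(s)$ of explicit Pochhammer quotients. All the asymptotic analysis is then done termwise on this sum using only Stirling's formula, and the trichotomy appears through the behavior of the factor $(1+s/2)_{L-k}^2/((L-k)!)^2 \sim (L-k)^s/\Gamma(1+s/2)^2$: for $-1<s<0$ the whole sum contributes and is evaluated as a hypergeometric value at $-1$; for $s=-1$ the series localizes near $k=L$ and the constant $\kappa_d$ comes from the asymptotics of partial sums of the zero-balanced series $\sum_n (1/2)_n^2/(n!)^2$ together with an elementary integral $\int_0^1 (x^d-1)/(1-x^2)\,\mathrm{d}x$, which is exactly where the sum $\sum_{j=1}^{\lfloor d/2\rfloor} 1/(d-2j+1)$ appears; for $s<-1$ only the $k=L$ neighborhood survives and the constant comes from Gauss's ${}_2F_1(1)$ formula.

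The trade-off is clear: your approach gives a transparent analytic picture of the trichotomy (convergence of the Weber--Schafheitlin tail versus the Darboux bulk), and would likely generalize more readily to kernels that are not exactly a single Jacobi polynomial. The paper's connection-formula approach sacrifices that intuition but converts everything to closed-form sums before any approximation, which is why the subleading constant $\kappa_d$ falls out with minimal error-tracking. In your scheme, obtaining $\kappa_d$ would indeed be the hardest part, since it requires matching the finite-part of the divergent Bessel integral against the constant term of the macroscopic integral and the next-order correction in Darboux's formula; this is feasible but delicate, whereas the paper needs only one known hypergeometric identity and one elementary integral.
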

Comparing to \eqref{eq:Max Riesz Energy Asymptotics}, we thus see that the harmonic ensemble has optimal second order asymptotics in expectation for $-1 < s < 0$, but not for $-2 < s \leq -1$.

The expected value of the Riesz $s$-energy $E_s (Z)$ of the spherical ensemble $Z=\{ Z_1, Z_2, \ldots, Z_N \}$ was found by Alishahi and Zamani \cite{AZ} in the whole range $s<4$. In particular, for $s<4$, $s \neq 0,2$,
\begin{equation*}
 \mathbb{E} E_s (Z) = \mathcal{I}_{s,2} N^2 - \frac{\Gamma \left( 1-\frac{s}{2} \right)}{2^s} \cdot \frac{N^2 \Gamma \left( N \right)}{\Gamma \left( N+1-\frac{s}{2} \right)} =\mathcal{I}_{s,2} N^2 - (1+o(1)) \frac{\Gamma \left( 1-\frac{s}{2} \right)}{2^s} N^{1+\frac{s}{2}} , 
\end{equation*}
thus the spherical ensemble has optimal second order asymptotics for $-2 < s < 0$.

In comparison, for $N$ uniformly distributed i.i.d.\ points $Y=\{ Y_1, Y_2, \ldots, Y_N \}$ on $\mathbb{S}^d$ we have $\mathbb{E} E_s(Y) = (N^2-N) \mathcal{I}_{s,d}$ if $s<d$, and $\mathbb{E} E_s(Y)=\infty$ if $s \ge d$.

\subsection{$L^2$ discrepancy on the sphere}

The sets
\begin{equation*}
 C(x,t) := \left\{ y \in \mathbb{S}^d \, : \, \langle x,y \rangle \ge t \right\} , \qquad x \in \mathbb{S}^d, \,\, t \in [-1,1] 
 \end{equation*}
are called spherical caps in $\mathbb{S}^d$. The spherical cap $L^2$ discrepancy of a finite point set $A_N=\{ a_1, a_2, \ldots, a_N \} \subset \mathbb{S}^d$ is defined as
\[ D_{\mathrm{cap},2}(A_N) = \left( \int_{-1}^1 \frac{1}{\mathrm{Vol}(\mathbb{S}^d)} \int_{\mathbb{S}^d} \left| \frac{|\{ 1 \le n \le N \, : \, a_n \in C(x,t) \}|}{N} - \frac{\mathrm{Vol}(C(x,t))}{\mathrm{Vol}(\mathbb{S}^d)} \right|^2 \, \mathrm{d} \mathrm{Vol}(x) \, \mathrm{d} t \right)^{1/2} . \]
The smallest possible order for any set of $N$ points is (see \cite[Proposition 1.1]{Ale}, \cite[Theorem 1]{Sto}, \cite{Beck2})
\[ \frac{1}{N^{\frac{1}{2} + \frac{1}{2d}}} \ll \inf_{|A_N|=N, \, A_N \subset \mathbb{S}^d} D_{\mathrm{cap},2}(A_N) \ll \frac{1}{N^{\frac{1}{2} + \frac{1}{2d}}} . \]
The spherical cap $L^2$ discrepancy relates to the Riesz $(-1)$-energy via the Stolarsky invariance principle
\[ N^2 D_{\mathrm{cap,2}}^2 (A_N) = \frac{\Gamma \left( \frac{d+1}{2} \right)}{\sqrt{\pi}d \Gamma \left( \frac{d}{2} \right)} \left( \mathcal{I}_{-1,d}N^2 - E_{-1}(A_N) \right) . \]
Stolarsky's original proof in \cite{Sto} has been simplified in \cite{BDM, BD}, and the result has been generalized to projective spaces by Skriganov \cite{SKR3}. We refer the reader to \cite{SKR1, SKR4} for a detailed study of the $L^p$ discrepancy on compact metric spaces, and to \cite{DT} for a general exposition on discrepancy.

Theorem \ref{thm:riesztheorem} with $s=-1$ and the Stolarsky invariance principle thus immediately imply that the expected value of the spherical cap $L^2$ discrepancy of the harmonic ensemble has optimal order up to a factor of $(\log N)^{1/2}$.
\begin{cor}\label{cor:capL2corollary Sd} 
The harmonic ensemble $X=X(L,\mathbb{S}^d)=\{ X_1, X_2, \ldots, X_N \}$ satisfies
\begin{equation*}
\mathbb{E} D_{\mathrm{cap,2}}^2 (X) = \frac{2^{\frac{1}{d}} \Gamma \left( \frac{d+1}{2} \right)}{\pi^{\frac{3}{2}} d (d!)^{\frac{1}{d}} \Gamma \left( \frac{d}{2} \right)} \cdot \frac{\log N}{N^{1+\frac{1}{d}}} + \frac{\kappa_d \Gamma \left( \frac{d+1}{2} \right)}{\pi^{\frac{1}{2}} d \Gamma \left( \frac{d}{2} \right)} \cdot \frac{1}{N^{1+\frac{1}{d}}} + O \left( \frac{\log N}{N^{1+\frac{2}{d}}} \right)
\end{equation*}
with an implied constant depending only on $d$. In particular,
\begin{equation*}
\sqrt{\mathbb{E} D_{\mathrm{cap,2}}^2 (X)} \sim \frac{2^{\frac{1}{2d}} \Gamma \left( \frac{d+1}{2} \right)^{\frac{1}{2}}}{\pi^{\frac{3}{4}} d^{\frac{1}{2}} (d!)^{\frac{1}{2d}} \Gamma \left( \frac{d}{2} \right)^{\frac{1}{2}}} \cdot \frac{(\log N)^{\frac{1}{2}}}{N^{\frac{1}{2}+\frac{1}{2d}}} .
\end{equation*}
\end{cor}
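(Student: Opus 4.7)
The plan is essentially to combine the Stolarsky invariance principle with Theorem \ref{thm:riesztheorem} at $s=-1$; the corollary has been flagged as an immediate consequence, so the only work is careful bookkeeping of the constants. First I would apply the Stolarsky identity to the (random) point set $X$ and take expectations, which gives
\begin{equation*}
\mathbb{E} D_{\mathrm{cap},2}^2 (X) = \frac{\Gamma\!\left(\frac{d+1}{2}\right)}{\sqrt{\pi}\, d\, \Gamma\!\left(\frac{d}{2}\right)} \cdot \frac{\mathcal{I}_{-1,d} N^2 - \mathbb{E} E_{-1}(X)}{N^2}.
\end{equation*}
Since Stolarsky's identity holds pointwise for every finite configuration $A_N \subset \mathbb{S}^d$, and since the spherical cap $L^2$ discrepancy is a bounded function of $(X_1,\ldots,X_N)$, the exchange of expectation and integration is justified without further comment.

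Next I would plug in the $s=-1$ branch of Theorem \ref{thm:riesztheorem}, namely
\begin{equation*}
\mathbb{E} E_{-1}(X) = \mathcal{I}_{-1,d} N^2 - C_{-1,d} N^{1-\frac{1}{d}} \log N - \kappa_d N^{1-\frac{1}{d}} + O\!\left( N^{1-\frac{2}{d}} \log N \right),
\end{equation*}
so that the difference $\mathcal{I}_{-1,d} N^2 - \mathbb{E} E_{-1}(X)$ becomes the positive combination of the three terms above. Dividing by $N^2$ and multiplying by $\Gamma(\frac{d+1}{2}) / (\sqrt{\pi}\, d\, \Gamma(\frac{d}{2}))$, and substituting the value $C_{-1,d} = 2^{1/d}/(\pi (d!)^{1/d})$ in the logarithmic coefficient, produces the displayed asymptotic expansion of $\mathbb{E} D_{\mathrm{cap},2}^2(X)$.

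Finally, to extract the asymptotic $\sqrt{\mathbb{E} D_{\mathrm{cap},2}^2(X)}$, I would just take the square root of the leading term, using that the $\kappa_d$-contribution is of order $N^{-1-1/d}$ and hence smaller than the logarithmic main term by a factor of $\log N$, so it absorbs into a $(1+o(1))$ factor. Pulling the constants $2^{1/d}$, $\Gamma(\frac{d+1}{2})$, $\pi^{3/2}$, $d$, $(d!)^{1/d}$, $\Gamma(\frac{d}{2})$ under the square root one by one yields the stated coefficient $2^{1/(2d)} \Gamma(\frac{d+1}{2})^{1/2} / (\pi^{3/4} d^{1/2} (d!)^{1/(2d)} \Gamma(\frac{d}{2})^{1/2})$ in front of $(\log N)^{1/2} / N^{1/2 + 1/(2d)}$.

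There is no real obstacle here: the entire argument is a substitution followed by arithmetic simplification, and the only thing to be careful about is not to lose track of which negative power of $N$ dominates in the error term after the factor $1/N^2$ is applied.
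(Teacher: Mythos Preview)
Your argument is correct and matches the paper's own justification: the corollary is stated there as an immediate consequence of Theorem~\ref{thm:riesztheorem} at $s=-1$ combined with the Stolarsky invariance principle, which is exactly the substitution-and-simplification you carry out. No additional ideas are needed.
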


The spherical ensemble $Z=\{ Z_1, Z_2, \ldots, Z_N \}$ on $\mathbb{S}^2$ attains the optimal rate \cite[Remark 4.3]{AZ}:
\[ \sqrt{\mathbb{E} D_{\mathrm{cap,2}}^2 (Z)} = \sqrt{\frac{\Gamma \left( \frac{3}{2} \right) \Gamma \left( N \right)}{\Gamma \left( N+\frac{3}{2} \right)} } \sim \frac{\pi^{1/4}}{2^{1/2} N^{3/4}} . \]
In comparison, for $N$ uniformly distributed i.i.d.\ points $Y=\{ Y_1, Y_2, \ldots, Y_N \}$ on $\mathbb{S}^d$, we have $\sqrt{\mathbb{E} D_{\mathrm{cap},2}^2 (Y)} = \left( \frac{\Gamma ((d+1)/2) \mathcal{I}_{-1,d}}{\pi^{1/2} d \Gamma (d/2)} \right)^{1/2} N^{-1/2}$.

\subsection{$L^2$ discrepancy on the complex projective space}

We can similarly define the $L^2$ discrepancy of a finite point set $A_N=\{ a_1, a_2, \ldots, a_N \} \subset \mathbb{CP}^d$ as
\[ \begin{split} &D_{\mathbb{CP}^d , 2} (A_N) \\ &= \left( \int_0^{\pi/2} \frac{1}{\mathrm{Vol}(\mathbb{CP}^d)} \int_{\mathbb{CP}^d} \left| \frac{|\{ 1 \le n \le N \, : \, a_n \in B_{\mathrm{geo}}(x,r) \} |}{N} - \frac{\mathrm{Vol}(B_{\mathrm{geo}}(x,r))}{\mathrm{Vol}(\mathbb{CP}^d)}  \right|^2 \sin (2r) \, \mathrm{d} \mathrm{Vol} (x) \, \mathrm{d}r \right)^{1/2} \\ &= \left( \int_0^1 \frac{1}{\mathrm{Vol}(\mathbb{CP}^d)} \int_{\mathbb{CP}^d} \left| \frac{|\{ 1 \le n \le N \, : \, a_n \in B_{\mathrm{FS}}(x,r) \} |}{N} - \frac{\mathrm{Vol}(B_{\mathrm{FS}}(x,r))}{\mathrm{Vol}(\mathbb{CP}^d)}  \right|^2 2 r \, \mathrm{d} \mathrm{Vol} (x) \, \mathrm{d}r \right)^{1/2}, \end{split} \]
where $B_{\mathrm{geo}}(x,r)$, resp.\ $B_{\mathrm{FS}}(x,r)$, is the open ball centered at $x \in \mathbb{CP}^d$ with radius $r$ in the geodesic metric, resp.\ Fubini--Study (chordal) metric, an analogue of the Euclidean metric on spheres. The Riesz $s$-energy $E_s (A_N)$ of a finite point set $A_N \subset \mathbb{CP}^d$ is defined in terms of the Fubini--Study metric. The Stolarsky invariance principle on $\mathbb{CP}^d$ due to Skriganov \cite[Corollary 1.1] {SKR3} states the identity
\begin{equation}\label{eq:CP Stolarsky}
N^2 D_{\mathbb{CP}^d ,2}^2 (A_N) = \frac{1}{2d+2} \left( \frac{2d}{2d+1} N^2 - E_{-1}(A_N) \right) .
\end{equation}
 
The $L^2$ discrepancy on $\mathbb{CP}^d$ was studied in \cite{SKR2, SKR3}, and the Riesz energy in \cite{ADGMS, BE2, CHS}. The optimal order of the former for any set of $N$ points is \cite[Theorem 2.2]{SKR2}
\[ \frac{1}{N^{\frac{1}{2} + \frac{1}{4d}}} \ll \inf_{|A_N|=N, \, A_N \subset \mathbb{CP}^d} D_{\mathbb{CP}^d,2}(A_N) \ll \frac{1}{N^{\frac{1}{2} + \frac{1}{4d}}}. \]

As a generalization of the spherical ensemble, Beltr\'{a}n and Etayo \cite{BE2} introduced the so-called projective ensemble on $\mathbb{CP}^d$. Given $L \in \mathbb{N}$, the projective ensemble $Z= Z(L, \mathbb{CP}^d) = \{ Z_1, Z_2, \ldots, Z_N \}$ has $N=\binom{L+d}{d}$ points and projection kernel satisfying
\begin{equation*}
|K(x,y)| = \frac{N}{\mathrm{Vol}(\mathbb{CP}^d)} \left| \left\langle \frac{x}{\|x\|}, \frac{y}{\|y\|} \right\rangle \right|^L,
\end{equation*}
with $x,y \in \mathbb{CP}^d$ being identified with corresponding points in $\mathbb{C}^{d+1}$, for ease of notation. The expected Riesz $s$-energy of the projective ensemble was shown in \cite{BE2} to satisfy for all\footnote{This was only stated for $0<s<2d$, but the proof actually works for $s<0$ as well.} $s<2d$, $s \neq 0$,
\[ \mathbb{E} E_s (Z) = \frac{d}{d-\frac{s}{2}} N^2 - d \Gamma \left( d-\frac{s}{2} \right) \frac{N^2 \Gamma (L+1)}{\Gamma \left( L+1+d - \frac{s}{2} \right)} = \frac{d}{d-\frac{s}{2}} N^2 -(1+o(1)) \frac{d \Gamma \left( d-\frac{s}{2} \right)}{(d!)^{1-\frac{s}{2d}}} N^{1+\frac{s}{2d}} . \]
The previous formula with $s=-1$ combined with the invariance principle \eqref{eq:CP Stolarsky} immediately implies that the $L^2$ discrepancy of the projective ensemble attains the optimal rate in expectation.
\begin{cor}\label{cor:capL2corollary CP} 
The projective ensemble $Z=Z(L, \mathbb{CP}^d) =\{ Z_1, Z_2, \ldots, Z_N \}$ satisfies
\[ \sqrt{\mathbb{E} D_{\mathbb{CP}^d ,2}^2 (Z)} = \sqrt{\frac{d \Gamma \left( d+\frac{1}{2} \right) \Gamma (L+1)}{(2d+2) \Gamma \left( L+1+d+\frac{1}{2} \right)}} \sim \frac{d^{\frac{1}{2}} \Gamma \left( d+\frac{1}{2} \right)^{\frac{1}{2}}}{(2d+2)^{\frac{1}{2}} (d!)^{\frac{1}{2} + \frac{1}{4d}}} \cdot \frac{1}{N^{\frac{1}{2} + \frac{1}{4d}}} . \]
\end{cor}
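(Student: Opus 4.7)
The plan is to combine the two ingredients that have already been laid out in the excerpt: the Stolarsky-type identity \eqref{eq:CP Stolarsky} on $\mathbb{CP}^d$, and the closed-form expression for $\mathbb{E} E_s(Z)$ of the projective ensemble due to Beltr\'an and Etayo, specialized to $s=-1$. The identity \eqref{eq:CP Stolarsky} is deterministic, so applying expectation to both sides gives
\[
N^2 \, \mathbb{E} D_{\mathbb{CP}^d,2}^2(Z) \;=\; \frac{1}{2d+2}\left( \frac{2d}{2d+1} N^2 - \mathbb{E} E_{-1}(Z) \right).
\]
This reduces the problem to a purely algebraic simplification.

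Next I would substitute $s=-1$ into the Beltr\'an--Etayo formula, obtaining
\[
\mathbb{E} E_{-1}(Z) \;=\; \frac{d}{d+\tfrac12}\, N^2 \;-\; d\,\Gamma\!\left(d+\tfrac12\right) \frac{N^2\,\Gamma(L+1)}{\Gamma\!\left(L+1+d+\tfrac12\right)}.
\]
The key cancellation is that $\tfrac{d}{d+1/2} = \tfrac{2d}{2d+1}$, so the $N^2$ terms on the right-hand side of the Stolarsky identity cancel exactly, leaving
\[
\mathbb{E} D_{\mathbb{CP}^d,2}^2(Z) \;=\; \frac{d\,\Gamma\!\left(d+\tfrac12\right)\,\Gamma(L+1)}{(2d+2)\,\Gamma\!\left(L+1+d+\tfrac12\right)},
\]
which is precisely the first (non-asymptotic) equality claimed in the corollary. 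Taking a square root finishes the exact identity.

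The only remaining step is the asymptotic evaluation as $L\to\infty$ (equivalently $N\to\infty$). By the standard ratio asymptotic $\Gamma(L+1)/\Gamma(L+1+d+\tfrac12) \sim L^{-d-1/2}$ from Stirling's formula, and since $N=\binom{L+d}{d}\sim L^d/d!$ gives $L\sim (d!\,N)^{1/d}$, one obtains
\[
\frac{\Gamma(L+1)}{\Gamma(L+1+d+\tfrac12)} \sim (d!)^{-1-\frac{1}{2d}}\, N^{-1-\frac{1}{2d}}.
\]
Plugging this into the exact formula and extracting the square root yields the stated asymptotic constant $d^{1/2}\Gamma(d+\tfrac12)^{1/2}/\bigl((2d+2)^{1/2}(d!)^{1/2+1/(4d)}\bigr)$ with rate $N^{-1/2-1/(4d)}$.

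There is no serious obstacle: the Stolarsky principle on $\mathbb{CP}^d$ and the closed-form energy are both quoted, the cancellation of the leading $N^2$ terms is exact (not asymptotic), and the Stirling step is routine. The only thing to be mildly careful about is the bookkeeping of constants and the conversion $L\leftrightarrow N$ via $N=\binom{L+d}{d}$; everything else is a direct substitution.
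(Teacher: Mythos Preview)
Your proposal is correct and matches the paper's approach exactly: the paper states that the corollary follows immediately from the Beltr\'an--Etayo energy formula at $s=-1$ combined with the invariance principle \eqref{eq:CP Stolarsky}, and you have simply written out the substitution, the cancellation $\tfrac{d}{d+1/2}=\tfrac{2d}{2d+1}$, and the Stirling asymptotics in detail. There is nothing to add or correct.
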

In the case $d=1$, $\mathbb{CP}^1$ corresponds to a 2-dimensional sphere of radius $1/2$, and the projective ensemble reduces to the spherical ensemble.

Finally, we note that the construction of the projective ensemble involved first constructing a projection kernel on $\mathbb{C}^d$ from an orthogonal system (of monomials with weight functions), then applying an inverse stereographic map from $\mathbb{C}^d$ to $\mathbb{CP}^d$ to achieve the kernel on the projective space. It is possible to construct determinantal point processes on the real, complex, quaternion, and octonian projective spaces from an orthogonal system of functions in $L^2(\mathbb{F}^d, \mathbb{C})$ in a similar manner. However, monomials are generally not orthogonal on $\mathbb{R}^d$, and are not complex-valued on  $\mathbb{H}^d$ and $\mathbb{O}^d$, so the projective ensemble cannot be generalized to the corresponding projective spaces.

\subsection{$L^2$ discrepancies on the torus}

We consider two types of $L^2$ discrepancies on the torus $\mathbb{T}^d$. The periodic $L^2$ discrepancy is defined in terms of axis-parallel boxes, whereas the ball $L^2$ discrepancy is in terms of Euclidean balls.

For $x,y \in \mathbb{T}$, the ``periodic interval'' $[x,y]_{\mathrm{per}}$ is the arc on the circle connecting the points $x$ and $y$ counterclockwise. This extends naturally to higher dimensions: for $x,y \in \mathbb{T}^d$, the ``periodic axis-parallel box'' is $[x,y]_{\mathrm{per}} = [x_1,y_1]_{\mathrm{per}} \times [x_2,y_2]_{\mathrm{per}} \times \cdots \times [x_d,y_d]_{\mathrm{per}}$. The so-called periodic $L^2$ discrepancy of a finite point set $A_N=\{ a_1, a_2, \ldots, a_N \} \subset \mathbb{T}^d$ is then defined as
\[ D_{\mathrm{per},2} (A_N) = \left( \int_{\mathbb{T}^d} \int_{\mathbb{T}^d} \left|\frac{ | \{ 1 \le n \le N \, : \, a_n \in [x,y]_{\mathrm{per}} \} |}{N} - \mathrm{Vol}([x,y]_{\mathrm{per}}) \right|^2 \, \mathrm{d} \mathrm{Vol} (x) \, \mathrm{d} \mathrm{Vol} (y) \right)^{1/2}. \]
Several variants of this notion have been considered before on the unit cube: the (ordinary) $L^2$ discrepancy is defined using only axis-parallel boxes anchored at the origin, whereas the extremal $L^2$ discrepancy uses axis-parallel boxes in the cube. For our purposes, the periodic version is the most natural, as it is compatible with the group operation on $\mathbb{T}^d$, i.e.\ it is invariant under translations of the point set $A_N$. In fact, $D_{\mathrm{per},2}$ is, up to normalizing constants, identical to the diaphony of Zinterhof \cite{Zin}. We refer to \cite{HKP} for a survey on the relationship between various versions of the $L^2$ discrepancy and the diaphony.

The smallest possible order for any set of $N$ points in $\mathbb{T}^d$ is (see \cite[Corollary 2]{HKP}, \cite[Main Theorem]{LE2})
\[ \frac{(\log N)^{\frac{d-1}{2}}}{N} \ll \inf_{|A_N|=N, \, A_N \subset \mathbb{T}^d} D_{\mathrm{per},2}(A_N) . \]
For $d=1$, the lower estimate is achieved by the $N^{\mathrm{th}}$ roots of unity, and for $d \geq 2$, the lower estimate is achieved for infinitely many $N$ \cite{HKP, KP,PI}.

As an analogue of Corollary \ref{cor:capL2corollary Sd}, we find the expected value of the periodic $L^2$ discrepancy of the harmonic ensemble on the torus. Throughout, $\gamma$ denotes Euler's constant.
\begin{thm}\label{L2torustheorem} The harmonic ensemble $X=X(T,\mathbb{T}^d)=\{ X_1, X_2, \ldots, X_N \}$ satisfies
\[ \mathbb{E} D_{\mathrm{per, 2}}^2 (X) = \frac{\log N + (\gamma+1) d}{2^{d-1} \pi^2 N^{1+\frac{1}{d}}} + O \left( \frac{(\log N)^2}{N^{1+\frac{2}{d}}} \right) \]
with an implied constant depending only on $d$. In particular,
\[ \sqrt{\mathbb{E} D_{\mathrm{per},2}^2 (X)} \sim \frac{1}{2^{\frac{d-1}{2}} \pi} \cdot \frac{(\log N)^{\frac{1}{2}}}{N^{\frac{1}{2} + \frac{1}{2d}}} . \]
\end{thm}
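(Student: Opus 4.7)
The plan is to expand the squared local discrepancy on $\mathbb{T}^{2d}$ in a Fourier series, use Parseval to reduce $D_{\mathrm{per},2}^2(A_N)$ to a weighted sum of squared exponential sums $|\frac{1}{N}\sum_n e^{2\pi i \langle m,a_n\rangle}|^2$, and then apply the determinantal variance formula to each mode.

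First I would prove the Parseval-type identity
\[
D_{\mathrm{per},2}^2(A_N) = \sum_{m \in \mathbb{Z}^d \setminus \{0\}} r(m) \left| \frac{1}{N}\sum_{n=1}^N e^{2\pi i \langle m, a_n\rangle}\right|^2,
\]
where $r(m) = \prod_{j=1}^d \rho(m_j)$ with $\rho(0) = 1/3$ and $\rho(h) = 1/(2\pi^2 h^2)$ for $h \neq 0$. The derivation proceeds by computing the Fourier coefficients of the signed local discrepancy $\mathbbm{1}_{[x,y]_{\mathrm{per}}}(a) - \mathrm{Vol}([x,y]_{\mathrm{per}})$ in $(x,y) \in \mathbb{T}^{2d}$ and applying Parseval. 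The central bookkeeping observation is that those Fourier indices $(k,l) \in \mathbb{Z}^{2d}$ with $k = -l$ contribute identically to the indicator term and the volume term and hence cancel, while the remaining contributions assemble into the stated product weight; the value $\rho(0) = 1/3 = \frac{1}{4} + \frac{1}{12}$ arises from combining the cancelled diagonal $(0,0)$ coefficient $1/4$ with the surviving off-diagonal Parseval mass $\sum_{r \ne 0}(2\pi r)^{-2} = 1/12$.

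Second, applying the variance formula \eqref{varianceformula} to $f(x) = e^{2\pi i \langle m, x\rangle}$, using that $K$ is translation invariant with $K(x,x) = N$, and noting that the $m$-th Fourier coefficient of $K^2$ counts lattice points in the intersection of two shifted cubes, yields
\[
\mathbb{E}\left|\sum_{n=1}^N e^{2\pi i \langle m, X_n\rangle}\right|^2 = N - r_T(m), \quad r_T(m) = \prod_{j=1}^d \max(0,\, 2T+1 - |m_j|), \quad m \neq 0.
\]
Combined with step one, $\mathbb{E} D_{\mathrm{per},2}^2(X) = N^{-1}\sum_{m \neq 0}r(m)(1 - \prod_j \sigma(m_j))$ with $\sigma(h) = \max(0,\, 1-|h|/(2T+1))$.

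Third, I would factor the resulting $d$-dimensional sum by inclusion-exclusion, $1 - \prod_j \sigma(m_j) = \sum_{\emptyset \ne A \subseteq [d]}(-1)^{|A|+1}\prod_{j \in A}(1 - \sigma(m_j))$, and use $\sum_{h \in \mathbb{Z}}\rho(h) = 1/2$ to collapse everything into the one-dimensional quantity
\[
\mathbb{E} D_{\mathrm{per},2}^2(X) = \frac{2^{-d} - (1/2 - S_T)^d}{N}, \qquad S_T := \sum_{h \neq 0}\frac{\min(1,\,|h|/(2T+1))}{2\pi^2 h^2}.
\]
Standard harmonic-sum and tail estimates, splitting the sum at $|h| = 2T+1$, give $S_T = \frac{\log(2T+1) + \gamma + 1}{\pi^2(2T+1)} + O(T^{-2})$, which becomes $\frac{\log N + d(\gamma+1)}{d\pi^2 N^{1/d}} + O(N^{-2/d})$ via $N = (2T+1)^d$. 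A one-term Taylor expansion $(1/2 - S_T)^d = 2^{-d}(1 - 2dS_T) + O(S_T^2)$ delivers the claimed leading term, with both the Taylor remainder and the $O(N^{-2/d})$ error in $S_T$ collapsing into the stated $O((\log N)^2/N^{1+2/d})$.

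The main technical obstacle I anticipate is step one, namely deriving the Parseval identity with the correct weights: this requires a careful enumeration of which $(k,l) \in \mathbb{Z}^{2d}$ Fourier indices survive the cancellation between indicator and volume terms, handled component-by-component, and tracking the resulting product structure through to the single-variable weight $\rho$. Once this identity is in hand, the DPP variance computation and the one-dimensional asymptotic analysis are routine.
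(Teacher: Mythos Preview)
Your proposal is correct and, from step three onward, is in fact cleaner than the paper's argument. Both approaches share steps one and two: the Parseval identity for $D_{\mathrm{per},2}^2$ (which the paper simply cites from \cite{HKP,LE1} rather than deriving) and the variance computation $\mathbb{E}\bigl|\sum_n e^{2\pi i\langle m,X_n\rangle}\bigr|^2 = N - \prod_j\max(0,\,2T+1-|m_j|)$. The divergence comes at step three. The paper expands the product $N - \prod_j(2T+1-|k_j|)$ into the alternating sum \eqref{variancefactor}, splits the lattice sum into the regions $\|k\|_\infty \le N^{1/d}$ and $\|k\|_\infty > N^{1/d}$, and estimates the contribution of each elementary symmetric function of $|k_1|,\dots,|k_d|$ separately. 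Your inclusion--exclusion factorization (equivalently: adding back the harmless $m=0$ term and using multiplicativity) instead collapses the entire $d$-dimensional sum into the exact identity $\mathbb{E}D_{\mathrm{per},2}^2(X) = N^{-1}\bigl(2^{-d}-(1/2-S_T)^d\bigr)$, so that all the analytic work is pushed into the single one-dimensional harmonic sum $S_T$ followed by a one-term Taylor expansion. Your route is shorter and yields the same constants with less bookkeeping; the paper's direct expansion, by contrast, makes the mechanism of the main contribution (exactly one large coordinate $|k_j|$) more transparent. Note also that the obstacle you flag in step one is not really there: that identity is standard, so your actual work is concentrated in step three, which you have already outlined completely and correctly.
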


Let us now consider the $L^2$ discrepancy with respect to balls. The geodesic metric on the flat torus $\mathbb{T}^d$ is $\rho (x,y)=(\sum_{j=1}^d \| x_j-y_j \|^2)^{1/2}$, $x=(x_1, \ldots, x_d), y=(y_1,\ldots, y_d) \in \mathbb{T}^d$, where $\| t \|=\min_{n \in \mathbb{Z}} |t-n|$, $t \in \mathbb{R}$, denotes the distance to the nearest integer function. Let $B(x,r)$ denote the open ball in the geodesic metric centered at $x \in \mathbb{T}^d$ with radius $r>0$. We define the ball $L^2$ discrepancy of a finite point set $A_N=\{ a_1, \ldots, a_N \} \subset \mathbb{T}^d$ as
\[ D_{\mathrm{ball},2}(A_N) = \left( 2 \int_0^{1/2} \int_{\mathbb{T}^d} \left| \frac{|\{ 1 \le n \le N \, : \, a_n \in B(x,r) \}|}{N} - \mathrm{Vol}(B(x,r)) \right|^2 \, \mathrm{d}\mathrm{Vol}(x) \, \mathrm{d}r \right)^{1/2} . \]
Note that this is also invariant under translations of the point set $A_N$. The smallest possible order for any set of $N$ points in $\mathbb{T}^d$ is (see \cite{Beck3,BC,MO})
\[ \frac{1}{N^{\frac{1}{2} + \frac{1}{2d}}} \ll \inf_{|A_N|=N, \, A_N \subset \mathbb{T}^d} D_{\mathrm{ball},2}(A_N) \ll \frac{1}{N^{\frac{1}{2} + \frac{1}{2d}}} . \]

We also find the expected value of the ball $L^2$ discrepancy of the harmonic ensemble on $\mathbb{T}^d$.
\begin{thm}\label{ballL2torustheorem} The harmonic ensemble $X=X(T,\mathbb{T}^d)=\{ X_1, X_2, \ldots, X_N \}$ satisfies
\[ \mathbb{E} D_{\mathrm{ball},2}^2 (X) = \frac{\pi^{\frac{d-5}{2}}}{d 2^{d-1} \Gamma \left( \frac{d+1}{2} \right)} \cdot \frac{\log N}{N^{1+\frac{1}{d}}} + O \left( \frac{1}{N^{1+\frac{1}{d}}} \right) \]
with an implied constant depending only on $d$.
\end{thm}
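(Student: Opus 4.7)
The plan is Fourier analysis on $\mathbb{T}^d$, parallel to the proof of Theorem \ref{L2torustheorem} but with the Fourier coefficients of the ball indicator replacing those of the box. Since $r<1/2$, the ball $B(y,r)\subset\mathbb{T}^d$ is a Euclidean ball translated by $y$, so $\mathbbm{1}_{B(y,r)}(x)=\mathbbm{1}_{B(0,r)}(x-y)$, and its toric Fourier coefficients coincide with the radial Euclidean Fourier transform:
\begin{equation*}
\hat b_r(k) = \begin{cases}\omega_d r^d,& k=0,\\ r^{d/2}|k|^{-d/2}J_{d/2}(2\pi r|k|),& k\ne 0,\end{cases}
\end{equation*}
where $\omega_d=\pi^{d/2}/\Gamma(d/2+1)$ is the volume of the unit ball in $\mathbb{R}^d$. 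Writing $S_k=\sum_{n=1}^N e^{2\pi i\langle k,X_n\rangle}$ and expanding $\frac{1}{N}|\{n:X_n\in B(y,r)\}|-\omega_d r^d = \frac{1}{N}\sum_{k\ne 0}\hat b_r(k)\,e^{-2\pi i\langle k,y\rangle}S_k$, Parseval in $y$ yields
\begin{equation*}
\mathbb{E}D_{\mathrm{ball},2}^2(X) = \frac{2}{N^2}\int_0^{1/2}\sum_{k\ne 0}|\hat b_r(k)|^2\,\mathbb{E}|S_k|^2\,dr.
\end{equation*}

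I then compute the variances $\mathbb{E}|S_k|^2$ for $k\ne 0$ using \eqref{varianceformula} with $f(x)=e^{2\pi i\langle k,x\rangle}$. Using $K(x,x)=N$ and the translation invariance $|K(x,y)|^2=|g(x-y)|^2$ for $g(u)=\sum_{\|j\|_\infty\le T}e^{2\pi i\langle j,u\rangle}$, a short Fourier calculation gives
\begin{equation*}
\mathbb{E}|S_k|^2 = N - r_T(k), \qquad r_T(k):=\#\{j\in\mathbb{Z}^d : \|j\|_\infty,\|j-k\|_\infty\le T\} = \prod_{i=1}^d(2T+1-|k_i|)_+.
\end{equation*}
Note that $r_T(k)=0$ for $\|k\|_\infty>2T$, while $r_T(k)\approx N$ for small $k$ — this suppression of the variance at low frequencies is where the DPP repulsion manifests.

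The remainder is asymptotic extraction. The Bessel asymptotic $J_{d/2}(z)^2=(\pi z)^{-1}(1+\cos(2z-(d+1)\pi/2))+O(z^{-3})$ gives, after integration over $r\in(0,1/2)$,
\begin{equation*}
\int_0^{1/2}|\hat b_r(k)|^2\,dr = \frac{1}{d\,2^{d+1}\pi^2|k|^{d+1}} + O(|k|^{-d-2}),
\end{equation*}
the oscillating part being controlled by integration by parts in $r$. Splitting the $k$-sum at $\|k\|_\infty=2T$: the tail $\|k\|_\infty>2T$ contributes $O(N^{-1-1/d})$ via $\sum_{\|k\|_\infty>2T}|k|^{-d-1}\lesssim T^{-1}$. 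For $0<\|k\|_\infty\le 2T$, the Taylor expansion
\begin{equation*}
N-r_T(k) = (2T+1)^d\Bigl[1-\prod_i\bigl(1-\tfrac{|k_i|}{2T+1}\bigr)\Bigr] = (2T+1)^{d-1}\|k\|_1 + O\bigl((2T+1)^{d-2}\|k\|_1^2\bigr)
\end{equation*}
reduces the main contribution to $\frac{(2T+1)^{d-1}}{N^2\,d\,2^d\pi^2}\sum_{0<|k|\le 2T}\|k\|_1/|k|^{d+1}$. Polar coordinates together with the Gaussian-integral identity $\int_{S^{d-1}}|\theta_1|\,d\sigma(\theta)=2\pi^{(d-1)/2}/\Gamma((d+1)/2)$ give
\begin{equation*}
\sum_{0<|k|\le 2T}\frac{\|k\|_1}{|k|^{d+1}} = \frac{2d\,\pi^{(d-1)/2}}{\Gamma((d+1)/2)}\log(2T) + O(1),
\end{equation*}
and combining with $(2T+1)^{d-1}/N^2=N^{-1-1/d}$ and $\log(2T)=(\log N)/d + O(1)$ delivers exactly the claimed main constant $\pi^{(d-5)/2}/(d\,2^{d-1}\Gamma((d+1)/2))$.

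The main obstacle is confirming that every residual contribution — the high-frequency tail $\|k\|_\infty>2T$, the Bessel oscillation, the quadratic Taylor remainder for $N-r_T(k)$, and the transition zone where $\|k\|_\infty$ is a positive fraction of $2T$ — is $O(N^{-1-1/d})$ without picking up an extra $\log N$, so that only the near-origin sum $\sum\|k\|_1/|k|^{d+1}$ carries the logarithm and fixes the sharp constant.
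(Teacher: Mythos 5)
Your proposal is correct and takes essentially the same route as the paper's proof: Parseval with the Bessel-function Fourier coefficients of the ball indicator (Lemma \ref{balldiaphonylemma}), the exact variance $\mathbb{E}|S_k|^2=N-\prod_{j=1}^d\max\{2T+1-|k_j|,0\}$ (Lemma \ref{harmonicvariancetoruslemma}), expansion of the variance factor so that only the $\|k\|_1$ term carries the logarithm, and the lattice-sum asymptotics $\sum_{0<|k|\le N^{1/d}}|k_1|/|k|^{d+1}=\frac{2\pi^{(d-1)/2}}{d\,\Gamma((d+1)/2)}\log N+O(1)$, with all constants matching. The only cosmetic differences are your cutoff at $\|k\|_\infty\le 2T$ instead of $|k|\le N^{1/d}$ and the Bessel remainder, which is $O(z^{-2})$ (oscillatory) rather than $O(z^{-3})$ after the leading terms but still gives your claimed $O(|k|^{-d-2})$ correction to $b_k$, and the residual contributions you list are indeed all $O(N^{-1-1/d})$ exactly as in the paper's estimates.
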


In comparison, for $N$ uniformly distributed i.i.d.\ points $Y=\{ Y_1, Y_2, \ldots, Y_N \}$ on $\mathbb{T}^d$, we have $\sqrt{\mathbb{E} D_{\mathrm{per},2}^2 (Y)} = \sqrt{2^{-d}-3^{-d}} N^{- \frac{1}{2}}$ and $\sqrt{\mathbb{E} D_{\mathrm{ball},2}^2 (Y)} = c_d N^{- \frac{1}{2}}$ with some constant $c_d>0$.

The harmonic ensemble on $\mathbb{T}^d$ thus achieves the same rate in expectation in the periodic and the ball $L^2$ discrepancy, and this rate is better than that for i.i.d.\ random points. For the ball $L^2$ discrepancy, this rate is almost optimal among all point sets of the same size, as was also the case for the spherical cap $L^2$ discrepancy. For the more sensitive periodic $L^2$ discrepancy, however, the attained rate is far from optimal.

\subsection{Wasserstein metric}

Let $\rho$ denote the geodesic distance on $M$, let $\mathcal{P}(M)$ be the set of Borel probability measures on $M$, and let $1 \le p < \infty$. The $p$-Wasserstein metric is defined as
\begin{equation*} 
W_p (\mu, \nu) = \inf_{\pi \in \mathrm{Coup}(\mu, \nu)} \left( \int_{M \times M} \rho (x,y)^p \, \mathrm{d} \pi (x,y) \right)^{1/p} , \qquad \mu, \nu \in \mathcal{P}(M), 
\end{equation*}
where $\mathrm{Coup}(\mu, \nu)$ is the set of couplings (transport plans) of $\mu$ and $\nu$, that is, the set of $\pi \in \mathcal{P}(M \times M)$ such that $\pi (B \times M) = \mu (B)$ and $\pi (M \times B) = \nu(B)$ for all Borel sets $B \subseteq M$. See \cite{VI} for basic properties of $W_p$.

Approximating a given probability measure by a finitely supported one is known as the quantization problem. The best rate for the normalized Riemannian volume measure is
\begin{equation}\label{eq:quantization}
\inf_{|\mathrm{supp} (\mu) | \le N} W_p (\mu, \mathrm{Vol}/\mathrm{Vol}(M)) \sim Q(d,p) \frac{\mathrm{Vol}(M)^{1/d}}{N^{1/d}},
\end{equation}
where the infimum is over all $\mu \in \mathcal{P}(M)$ supported on at most $N$ points with arbitrary weights. The constant $Q(d,p)>0$ depends only on $p$ and the dimension $d$, but not on the geometry of $M$. For instance, $Q(2,2) = \frac{5 \sqrt{3}}{54}$ and $\sqrt{d} \ll Q(d,2) \ll \sqrt{d}$, however the precise value of $Q(d,p)$ is not known in general. We refer to \cite{GL, IA, KL} for more details.

For simplicity of notation, we will identify a finite set $A_N=\{ a_1, a_2, \ldots, a_N \} \subset M$ with its empirical measure, and write $W_p (A_N,\nu) = W_p \left( \frac{1}{N} \sum_{n=1}^N \delta_{a_n} , \nu \right)$. The main message of the following result is that the empirical measure of the harmonic and spherical ensembles on the $2$-dimensional spaces $\mathbb{T}^2$ and $\mathbb{S}^2$ attain the optimal rate $N^{-1/2}$ in \eqref{eq:quantization} in the Wasserstein metric $W_2$. In particular, the following estimates are sharp up to a constant factor.
\begin{thm}\label{thm: 2dimWassersteintheorem} \hspace{1mm}
\begin{enumerate}
\item[(i)] The harmonic ensemble $X=X(L,\mathbb{S}^2)=\{ X_1, X_2, \ldots, X_N \}$ satisfies
\[ \sqrt{\mathbb{E} W_2^2 \left( X , \mathrm{Vol} /(4 \pi) \right) } \le \frac{3.08}{\sqrt{N}} . \]
\item[(ii)] The harmonic ensemble $X=X(T,\mathbb{T}^2)=\{ X_1, X_2, \ldots, X_N \}$ satisfies
\[ \sqrt{\mathbb{E} W_2^2 \left( X, \mathrm{Vol} \right) } \le \frac{1.75}{\sqrt{N}} . \]
\item[(iii)] The spherical ensemble $Z=\{ Z_1, Z_2, \ldots, Z_N \}$ on $\mathbb{S}^2$ satisfies
\[ \sqrt{\mathbb{E} W_2^2 \left( Z, \mathrm{Vol} /(4 \pi) \right) } \le \frac{1.76}{\sqrt{N}} . \]
\end{enumerate}
\end{thm}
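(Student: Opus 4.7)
The plan is to combine a heat-kernel smoothing argument with the variance formula \eqref{varianceformula}, following the approach of Götze and Jalowy \cite{GJ,JA} cited in the introduction. Write $\mu = N^{-1}\sum_{n=1}^N \delta_{X_n}$, let $\nu$ denote the normalized Riemannian volume on $M$, and let $P_t$ denote the heat semigroup on $M$. On both $\mathbb{S}^2$ and $\mathbb{T}^2$, $P_t$ acts diagonally on Laplace eigenfunctions, and $P_t\nu = \nu$.

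The first step is the triangle inequality
\begin{equation*}
W_2(\mu,\nu) \le W_2(\mu, P_t\mu) + W_2(P_t\mu, \nu),
\end{equation*}
together with two standard estimates: $W_2(\mu, P_t\mu)^2 \le C_1 t$, which quantifies the mean squared Brownian displacement on a $2$-dimensional manifold and requires no property of $\mu$ beyond being a probability measure; and a Peyre-type bound
\begin{equation*}
W_2(P_t\mu, \nu)^2 \le C_2 \, \|P_t\mu - \nu\|_{\dot H^{-1}(\nu)}^2,
\end{equation*}
applicable because $P_t\mu$ has a density that is uniformly bounded below on $M$ once $t>0$.

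The next step is to expand the smoothed $\dot H^{-1}$ norm in Laplace eigenfunctions $\phi_k$ with eigenvalues $\lambda_k>0$,
\begin{equation*}
\|P_t\mu - \nu\|_{\dot H^{-1}}^2 = \sum_{k\neq 0} \frac{e^{-2t\lambda_k}}{\lambda_k} \left| \frac{1}{N}\sum_{n=1}^N \phi_k(X_n) \right|^2,
\end{equation*}
and to take expectations term by term via \eqref{varianceformula} with $f=\phi_k$. On $\mathbb{T}^2$, taking $\phi_k(x)=e^{2\pi i\langle k,x\rangle}$ and using translation-invariance of the harmonic kernel yields the explicit identity
\begin{equation*}
\mathbb{E}\Bigl|\textstyle\sum_n \phi_k(X_n)\Bigr|^2 = N - r(k),
\end{equation*}
where $r(k)=\#\{j\in\mathbb{Z}^2:\|j\|_\infty\le T,\ \|j-k\|_\infty\le T\}$; for $\|k\|_\infty\le 2T$ the defect $N-r(k)$ is much smaller than $N$, which is the key determinantal cancellation. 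On $\mathbb{S}^2$, rotation-invariance of both ensembles means that $|K(x,y)|^2$ is a function of $\langle x,y\rangle$, so the Funk--Hecke formula diagonalizes the integral operator with kernel $|K(x,y)|^2$ in the basis of spherical harmonics; the resulting eigenvalues can be read off from a Legendre expansion, which for the spherical ensemble follows directly from the binomial expansion of $((1+\langle x,y\rangle)/2)^{N-1}$ via \eqref{sphericalkernel}, and for the harmonic ensemble from a Clebsch--Gordan-type calculation on products of zonal harmonics.

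Finally, I would split the spectral sum into a low-frequency part with $\lambda_k\lesssim N$, where the cancellation above controls the contribution by $O(1/N)$ uniformly in $t$, and a high-frequency tail with $\lambda_k\gtrsim N$, where the Gaussian factor $e^{-2t\lambda_k}$ at the choice $t\asymp 1/N$ yields another $O(1/N)$ contribution. Together with $W_2(\mu,P_t\mu)^2\lesssim t\lesssim 1/N$, this gives $\mathbb{E} W_2^2(\mu,\nu)\lesssim 1/N$. The main obstacle is not the scaling but the explicit numerical constants $3.08$, $1.75$, $1.76$: each of the steps above (the Brownian displacement bound, the constant in the Peyre inequality, the summations of $e^{-2t\lambda_k}/\lambda_k$ weighted by the cancellation factors, and the optimization in $t$) must be carried out with sharp rather than asymptotic estimates, and the three ensembles have to be handled case by case because the prefactors in $\mathbb{E}|\sum_n\phi_k(X_n)|^2$ differ.
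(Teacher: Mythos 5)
Your plan is essentially the paper's proof: the heat-kernel/Peyre-type smoothing bound you re-derive is exactly the $W_2$ smoothing inequality \eqref{spheresmoothing}--\eqref{torussmoothing} imported from \cite{BO}, and taking expectations of the spectral sum term by term via \eqref{varianceformula} (lattice-point count on $\mathbb{T}^2$, Legendre/Funk--Hecke expansion of $|K(x,y)|^2$ on $\mathbb{S}^2$) reproduces Lemmas \ref{harmonicvariancetoruslemma}, \ref{harmonicvariancespherelemma} and \ref{sphericalvariancelemma}, after which the choice $t\asymp 1/N$ gives the $N^{-1/2}$ rate just as in the paper. What remains, as you note yourself, is the quantitative execution --- the sharp constants in the variance bounds (e.g.\ $\frac{2^{3/2}\cdot 3}{\pi^2}\,\ell(\ell+1)N^{1/2}$ on $\mathbb{S}^2$, obtained from the explicit triple Legendre integral) together with the explicit Gaussian-sum and lattice-point estimates and the optimization in $t$ --- which is exactly where the paper's work lies and where the stated values $3.08$, $1.75$, $1.76$ come from.
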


In comparison, $N$ uniformly distributed i.i.d.\ points $Y=\{ Y_1, Y_2, \ldots, Y_N \}$ on any $2$-dimensional compact manifold $M$ satisfy \cite{AST}
\[ \sqrt{\mathbb{E} W_2^2 \left( Y, \mathrm{Vol} / \mathrm{Vol}(M) \right) } \sim \sqrt{\frac{\mathrm{Vol}(M)}{4 \pi}} \sqrt{\frac{\log N}{N}} . \]
The appearance of the extra factor $\sqrt{\log N}$ was first observed by Ajtai, Koml\'{o}s, and Tusn\'{a}dy in connection to the optimal matching problem \cite{AKT}.

The optimal transport problem is much simpler on the one-dimensional torus $\mathbb{T}$. In this case we simply have $W_2 (A_N, \mathrm{Vol}) = 2^{-1/2} D_{\mathrm{per},2} (A_N)$ for any finite set $A_N$, see Section \ref{sec:onedimtorus}. This leads to the following explicit formula for the harmonic ensemble.
\begin{thm}\label{thm:1dimWasserstein} 
The harmonic ensemble $X=X(T,\mathbb{T})=\{ X_1, X_2, \ldots, X_N \}$ satisfies
\[ \begin{split} \mathbb{E} W_2^2 (X, \mathrm{Vol}) = \frac{1}{2} \mathbb{E} D_{\mathrm{per},2}^2 (X) &= \frac{1}{2 \pi^2 N^2} \sum_{k=1}^N \frac{1}{k} + \frac{1}{2 \pi^2 N} \sum_{k=N+1}^{\infty} \frac{1}{k^2} \\ &= \frac{\log N + \gamma +1}{2 \pi^2 N^2} + O \left( \frac{1}{N^3} \right) . \end{split} \]
In particular,
\[ \sqrt{\mathbb{E} W_2^2 (X, \mathrm{Vol})} = \frac{1}{\sqrt{2}} \sqrt{\mathbb{E} D_{\mathrm{per},2}^2 (X)} \sim \frac{\sqrt{\log N}}{\sqrt{2} \pi N}. \]
\end{thm}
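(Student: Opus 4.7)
The plan is to reduce everything to a Fourier-series / Parseval computation, exploit that the linear statistics $\sum_n e^{2\pi i k X_n}$ are handled cleanly by the DPP variance formula \eqref{varianceformula}, and finish with standard harmonic-sum asymptotics.

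First, I would invoke the one-dimensional identity $W_2^2(A_N,\mathrm{Vol})=\tfrac12 D_{\mathrm{per},2}^2(A_N)$, which is proved in Section~\ref{sec:onedimtorus} and is a classical fact about optimal transport on the circle (via the monotone rearrangement / quantile description of transport maps on $\mathbb{T}$). This reduces the problem to evaluating $\mathbb{E} D_{\mathrm{per},2}^2(X)$.

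Next, I would expand the indicator $\mathbf{1}_{[x,y]_{\mathrm{per}}}(t)-(y-x)_{\mathrm{per}}$ in Fourier series, with coefficients $c_k(x,y)=\frac{e^{-2\pi i ky}-e^{-2\pi i kx}}{-2\pi i k}$ for $k\neq 0$, and integrate $|{\sum_{k\neq 0} c_k(x,y)S_k}|^2$ over $(x,y)\in\mathbb{T}^2$, where $S_k=\frac{1}{N}\sum_{n=1}^N e^{2\pi i k X_n}$. Since $\int_{\mathbb{T}^2} c_k \overline{c_j}\,dx\,dy=\tfrac{1}{2\pi^2 k^2}\delta_{j=k}$ (a short direct computation), one obtains the Parseval-type identity
\[
D_{\mathrm{per},2}^2(A_N)=\sum_{k\neq 0}\frac{|S_k|^2}{2\pi^2 k^2}.
\]

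The heart of the proof is then the computation of $\mathbb{E}|S_k|^2$ via \eqref{varianceformula} with $f(x)=e^{2\pi i k x}$. The kernel $K(x,y)=\sum_{|j|\le T}e^{2\pi i j(x-y)}$ satisfies $K(x,x)=N$ and
\[
|K(x,y)|^2=\sum_{|m|\le 2T}(N-|m|)\,e^{2\pi i m(x-y)}.
\]
Plugging into \eqref{varianceformula}, the first term contributes $N$, the product $K(x,x)K(y,y)$ contributes $0$ for $k\neq 0$, and the $|K(x,y)|^2$ integral selects the mode $m=-k$, giving $(N-|k|)_+$. This yields
\[
\mathbb{E}|S_k|^2=\begin{cases}|k|/N^2 & 1\le |k|\le N-1,\\ 1/N & |k|\ge N,\end{cases}
\]
with the cutoff being precisely at the edge of the spectrum of the projection kernel. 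Substituting and collecting (using symmetry in $k\mapsto -k$) gives
\[
\mathbb{E} D_{\mathrm{per},2}^2(X)=\frac{1}{\pi^2 N^2}\sum_{k=1}^{N-1}\frac{1}{k}+\frac{1}{\pi^2 N}\sum_{k=N}^{\infty}\frac{1}{k^2},
\]
which, after moving the boundary term $1/(\pi^2 N^3)$ between the two sums, is exactly the stated expression. The asymptotic $\sum_{k\le N}1/k=\log N+\gamma+O(1/N)$ and $\sum_{k>N}1/k^2=1/N+O(1/N^2)$ deliver the final $\frac{\log N+\gamma+1}{2\pi^2 N^2}+O(N^{-3})$ estimate.

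There is no really hard step here; the only nontrivial input is the DPP variance identity \eqref{varianceformula}, and the mild subtlety is bookkeeping: keeping track of the transition at $|k|=N$ between the two regimes for $\mathbb{E}|S_k|^2$, and verifying that the two natural index shifts produce the same closed form as in the statement.
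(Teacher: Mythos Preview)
Your proposal is correct and follows essentially the same route as the paper: the paper also combines the identity $W_2^2=\tfrac12 D_{\mathrm{per},2}^2$ on $\mathbb{T}$ with the Fourier formula \eqref{L2discrepformula} for $D_{\mathrm{per},2}^2$ and the variance computation of Lemma~\ref{harmonicvariancetoruslemma} (which is exactly your $\mathbb{E}|\sum_n e^{2\pi i k X_n}|^2=N-(N-|k|)_+$), arriving at the same closed form \eqref{onedimL2discrep}. The only difference is cosmetic: you rederive \eqref{L2discrepformula} and the $d=1$ case of Lemma~\ref{harmonicvariancetoruslemma} from scratch, whereas the paper simply cites them; your observation about shifting the boundary term $1/(\pi^2 N^3)$ between the two sums is exactly right and reconciles your index convention with the paper's.
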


In dimensions $d \ge 3$, uniformly distributed i.i.d.\ points $Y=\{ Y_1, Y_2, \ldots, Y_N \}$ in fact match the optimum in \eqref{eq:quantization}: for all large enough $N$,
\[ \sqrt{\mathbb{E} W_2^2 \left( Y , \mathrm{Vol} /\mathrm{Vol}(M) \right) } \ll \sqrt{d} \frac{\mathrm{Vol}(M)^{1/d}}{N^{1/d}} . \]
Following the proof of this fact in \cite{BO}, the same can be deduced for the harmonic ensemble on $\mathbb{S}^d$ and $\mathbb{T}^d$, $d \ge 3$. More generally, the same holds for the harmonic ensemble on a compact manifold $M$ of dimension $d \ge 3$, as defined in Remark \ref{harmoniconM}, provided that $K(x,x)$ is constant.

\section{Riesz energy on $\mathbb{S}^d$}

Given real parameters $\alpha, \beta >-1$, let $P_k^{(\alpha, \beta)} (t)$, $k \ge 0$ denote the Jacobi polynomials. They satisfy the orthogonality relation \cite[p.\ 99]{AAR}
\begin{equation}\label{orthogonality}
\int_{-1}^1 P_k^{(\alpha, \beta)} (t) P_n^{(\alpha, \beta)} (t) (1-t)^{\alpha} (1+t)^{\beta} \, \mathrm{d}t = \frac{2^{\alpha + \beta +1} \Gamma (k+\alpha +1) \Gamma (k+\beta +1)}{(2k+\alpha + \beta +1) \Gamma (k+\alpha + \beta +1) k!} \delta_{kn}.
\end{equation}
We will often use the fact that $\Gamma (k+x)/\Gamma (k+y)=(1+O(k^{-1}))k^{x-y}$, an immediate corollary of Stirling's formula.

\begin{proof}[Proof of Theorem \ref{thm:riesztheorem}]
For any $s<d$, we have the explicit formula \cite{BE2}
\begin{equation*} 
\mathbb{E} E_s (X) = \mathcal{I}_{s,d} N^2 - \frac{(d-1)! N^2}{2^{d-1+\frac{s}{2}} \Gamma \left( \frac{d}{2} \right)^2 \binom{L+\frac{d}{2}}{L}^2} \int_{-1}^1 P_L^{(\frac{d}{2}, \frac{d-2}{2})} (t)^2 (1-t)^{\frac{d-2-s}{2}} (1+t)^{\frac{d-2}{2}} \, \mathrm{d} t . 
\end{equation*}
The asymptotics in the case $0 \le s <d$ was found in \cite{BMO}, therefore we assume $s<0$. The connection formula for Jacobi polynomials \cite[p.\ 358]{AAR} yields
\begin{equation*} 
P_L^{(\frac{d}{2}, \frac{d-2}{2})}(t)= \frac{(\frac{d}{2})_L}{(d-1-\frac{s}{2})_{L+1}} \sum_{k=0}^L \left( 2k+d-1-\frac{s}{2} \right) \frac{(1+\frac{s}{2})_{L-k}(d-1-\frac{s}{2})_k (L+d)_k} {(L-k)! (\frac{d}{2})_k (L+d-\frac{s}{2})_k} P_k^{(\frac{d-2-s}{2}, \frac{d-2}{2})}(t), 
\end{equation*}
where $(y)_k=\prod_{j=0}^{k-1} (y+j)$ denotes the rising factorial. The orthogonality relation \eqref{orthogonality} thus leads to
\begin{equation*} 
\mathbb{E} E_s (X) = \mathcal{I}_{s,d} N^2 - \frac{d\cdot d! N^2 (L!)^2}{2^{2+s} \left( L+\frac{d}{2} \right)^2 \Gamma \left( L+d-\frac{s}{2} \right)^2} F(s) 
\end{equation*}
with
\begin{equation*}
 F(s)= \frac{\Gamma \left( d-1-\frac{s}{2} \right)^2}{\Gamma \left( \frac{d}{2} \right)^2} \sum_{k=0}^L \left( 2k+d-1- \frac{s}{2} \right) \frac{(1+\frac{s}{2})_{L-k}^2 (d-1-\frac{s}{2})_k^2 (L+d)_k^2 \Gamma(k+ \frac{d}{2} - \frac{s}{2}) \Gamma(k+\frac{d}{2})}{((L-k)!)^2( \frac{d}{2})_k^2 ( L + d - \frac{s}{2})_k^2 \Gamma(k+d-1-\frac{s}{2}) k!} . 
 \end{equation*}
Since $N=(1+O(L^{-1})) 2L^d /d!$, we have
\begin{equation*} 
\frac{d d! N^2 (L!)^2}{2^{2+s} \left( L+\frac{d}{2} \right)^2 \Gamma \left( L+d-\frac{s}{2} \right)^2} = \left( 1+O \left( N^{-\frac{1}{d}} \right) \right) \frac{d}{2^{s+\frac{s}{d}} (d!)^{1-\frac{s}{d}}} N^{\frac{s}{d}}, 
\end{equation*}
so it remains to find the asymptotics of $F(s)$.

Observe that
\begin{equation}\label{stirling}
\begin{split} \frac{(d-1-\frac{s}{2})_k}{(\frac{d}{2})_k} = \frac{\Gamma \left( k+d-1-\frac{s}{2} \right) \Gamma \left( \frac{d}{2} \right)}{\Gamma \left( d-1-\frac{s}{2} \right) \Gamma \left( k+\frac{d}{2} \right)} &= \left( 1+O \left( k^{-1} \right) \right) \frac{\Gamma \left( \frac{d}{2} \right)}{\Gamma \left( d-1-\frac{s}{2} \right)} k^{\frac{d}{2} -1 -\frac{s}{2}}, \\ \frac{(L+d)_k}{(L+d-\frac{s}{2})_k} = \frac{\Gamma \left( k+L+d \right) \Gamma \left( L+d-\frac{s}{2} \right)}{\Gamma \left( L+d \right) \Gamma \left( k+L+d-\frac{s}{2} \right)} &= \left( 1+O \left( L^{-1} \right) \right) (L+k)^{\frac{s}{2}} L^{-\frac{s}{2}}, \\ \frac{\Gamma \left( k+\frac{d}{2} - \frac{s}{2} \right) \Gamma \left( k+\frac{d}{2} \right)}{\Gamma \left( k+d-1-\frac{s}{2} \right) k!} &=1+O \left( k^{-1} \right) , \end{split}
\end{equation}
and also the rough estimate
\begin{equation*} 
\left| \frac{(1+\frac{s}{2})_{L-k}}{(L-k)!} \right| \ll (L-k+1)^{\frac{s}{2}} \qquad \textrm{uniformly in } 0 \le k \le L . 
\end{equation*}
Substituting \eqref{stirling} in the definition of $F(s)$ thus yields
\begin{equation}\label{Fsapproximation}
F(s)=2 \sum_{k=0}^L \frac{(1+\frac{s}{2})_{L-k}^2}{((L-k)!)^2} k^{d-1-s} (L+k)^s L^{-s} + O \left( \sum_{k=0}^L (L-k+1)^s k^{d-2-s} \right) .
\end{equation}

Assume first that $-1<s<0$. Then the error term in \eqref{Fsapproximation} is $O(L^{d-1})$. The $k=L$ term in \eqref{Fsapproximation} is $O(L^{d-1-s})$. For the terms $0 \le k <L$ we apply
\begin{equation*}
 \frac{(1+\frac{s}{2})_{L-k}}{(L-k)!} = \frac{\Gamma \left( L-k+1+\frac{s}{2} \right)}{\Gamma \left( 1+\frac{s}{2} \right) \Gamma \left( L-k+1 \right)} = \left( 1+O \left( \frac{1}{L-k} \right) \right) \frac{(L-k)^{\frac{s}{2}}}{\Gamma \left( 1+\frac{s}{2} \right)}
  \end{equation*}
to deduce
\begin{equation*}
 \begin{split} F(s) &= \frac{2}{\Gamma \left( 1+\frac{s}{2} \right)^2} \sum_{k=0}^{L-1} (L-k)^s k^{d-1-s} (L+k)^s L^{-s} + O \left( L^{d-1} + \sum_{k=0}^{L-1} (L-k)^{s-1} k^{d-1-s} \right) \\ &= \frac{2}{\Gamma \left( 1+\frac{s}{2} \right)^2} L^d \int_0^1 (1-x)^s x^{d-1-s} (1+x)^s \, \mathrm{d} x + O( L^{d-1-s}) \\ &= \frac{\Gamma \left( 1+s \right) \Gamma \left( \frac{d-s}{2} \right)}{\Gamma \left( 1+\frac{s}{2} \right)^2 \frac{d+s}{2} \Gamma \left( \frac{d+s}{2} \right)} L^d +O( L^{d-1-s}) \\ &= \left( 1+O \left( N^{-\frac{1}{d} - \frac{s}{d}} \right) \right) \frac{\Gamma \left( 1+s \right) \Gamma \left( \frac{d-s}{2} \right) d! N}{\Gamma \left( 1+\frac{s}{2} \right)^2 (d+s) \Gamma \left( \frac{d+s}{2} \right)} . \end{split} 
 \end{equation*}
The integral in the previous formula was evaluated as the value of a hypergeometric function at the point $-1$ \cite[p.\ 126]{AAR}. After some simplification involving Legendre's duplication formula for the gamma function, $\Gamma (x) \Gamma (x+1/2)=2^{1-2x} \sqrt{\pi} \Gamma (2x)$, we arrive at the claim in the case $-1<s<0$.

Assume next that $s=-1$. Then the error term in \eqref{Fsapproximation} is $O(L^{d-1} \log L)$. Let us write $F(-1)$ in the form
\begin{equation*}
 F(-1)= 2 \sum_{k=0}^L \left( \frac{(1/2)_{L-k}^2}{((L-k)!)^2} - \frac{1}{\pi (L-k+1)} \right) \frac{L k^d}{L+k} + 2 \sum_{k=0}^L \frac{L k^d}{\pi (L-k+1) (L+k)} + O(L^{d-1}\log L) . 
 \end{equation*}
Here
\[ \left| \frac{(1/2)_{L-k}^2}{((L-k)!)^2} - \frac{1}{\pi (L-k+1)} \right| \ll \frac{1}{(L-k+1)^2} , \]
and $Lk^d /(L+k) = L^d/2 + O((L-k)L^{d-1})$, hence
\begin{equation*}
 \begin{split} 2 \sum_{k=0}^L \left( \frac{(1/2)_{L-k}^2}{((L-k)!)^2} - \frac{1}{\pi (L-k+1)} \right) \frac{L k^d}{L+k} &= L^d \sum_{k=0}^L \left( \frac{(1/2)_{L-k}^2}{((L-k)!)^2} - \frac{1}{\pi (L-k+1)} \right) +O (L^{d-1} \log L) \\ &= L^d \sum_{n=0}^{\infty} \left( \frac{(1/2)_n^2}{(n!)^2} - \frac{1}{\pi (n+1)} \right) +O(L^{d-1} \log L) \\ &= \frac{4 \log 2}{\pi} L^d +O(L^{d-1} \log L) . \end{split} 
 \end{equation*}
The infinite series in the previous formula was evaluated using the asymptotics of the partial sums of zero-balanced hypergeometric series \cite[p.\ 110]{LU}. Further,
\begin{equation*}
 \begin{split} 2 \sum_{k=0}^L \frac{L k^d}{\pi (L-k+1) (L+k)} &= \frac{2L^d}{\pi} \int_0^1 \frac{x^d}{\left( 1+\frac{1}{L}-x \right) (1+x)} \, \mathrm{d}x +O( L^{d-1}) \\ &= \frac{2L^d}{\pi} \left( \int_0^1 \frac{x^d-1}{1-x^2} \, \mathrm{d}x + \int_0^1 \frac{1}{\left( 1+\frac{1}{L}-x \right) (1+x)} \, \mathrm{d}x \right) +O(L^{d-1} \log L) \\ &= \frac{1}{\pi} L^d \log L + \left( - \frac{2}{\pi} \sum_{j=1}^{\lfloor d/2 \rfloor} \frac{1}{d-2j+1} + \frac{(-1)^d}{\pi} \log 2 \right) L^d +O(L^{d-1} \log L) .  \end{split}
  \end{equation*}
Using $N=(1+O(L^{-1})) 2L^d/d!$, we thus have
\begin{equation*}
 \begin{split} F(-1) &= \frac{1}{\pi} L^d \log L + \left( \frac{4+(-1)^d}{\pi} \log 2 - \frac{2}{\pi} \sum_{j=1}^{\lfloor d/2 \rfloor} \frac{1}{d-2j+1} \right) L^d +O(L^{d-1} \log L) \\ &= \frac{(d-1)!}{2 \pi} N \log N + \frac{d!}{2} \left( \frac{\log (d! /2)}{\pi d} + \frac{4+(-1)^d}{\pi} \log 2 - \frac{2}{\pi} \sum_{j=1}^{\lfloor d/2 \rfloor} \frac{1}{d-2j+1} \right) N +O \left( N^{1-\frac{1}{d}} \log N \right) , \end{split} 
 \end{equation*}
and the claim for $s=-1$ follows.

Finally, assume that $s<-1$. Then the error term in \eqref{Fsapproximation} is $O(L^{d-2-s})$. If $s=-2$, we have $(1+\frac{s}{2})_{L-k}=0$ unless $k=L$. Otherwise, we use
\begin{equation*}
 k^{d-1-s}(L+k)^s L^{-s} = 2^s L^{d-1-s}+O((L-k)L^{d-2-s}). 
 \end{equation*}
In either case, we obtain
\begin{equation*}
 \begin{split} F(s) &= 2^{1+s} L^{d-1-s} \sum_{k=0}^L \frac{(1+\frac{s}{2})_{L-k}^2}{((L-k)!)^2} + O( L^{d-2-s} + L^d ) \\ &= 2^{1+s} L^{d-1-s} \sum_{n=0}^{\infty} \frac{(1+\frac{s}{2})_n^2}{(n!)^2} + O( L^{d-2-s}+L^d ) \\ &= 2^{1+s} L^{d-1-s} \frac{\Gamma (-1-s)}{\Gamma \left( - \frac{s}{2} \right)^2} + O( L^{d-2-s}+L^d ) \\ &=2^{s+\frac{s}{d}+\frac{1}{d}} (d!)^{1-\frac{1}{d}-\frac{s}{d}} \frac{\Gamma (-1-s)}{\Gamma \left( - \frac{s}{2} \right)^2} N^{1-\frac{1}{d} - \frac{s}{d}} +O \left( N^{1-\frac{2}{d}-\frac{s}{d}} + N \right) , \end{split}
  \end{equation*}
and the claim for $s<-1$ follows. The infinite series in the previous formula was evaluated by a formula of Gauss on the value of hypergeometric functions at the point $1$ \cite[p.\ 66]{AAR}.
\end{proof}

\section{$L^2$ discrepancies on $\mathbb{T}^d$}

\subsection{Periodic $L^2$ discrepancy}

In this section, we estimate the periodic $L^2$ discrepancy of the harmonic ensemble on $\mathbb{T}^d$, and prove Theorem \ref{L2torustheorem}. The proof is based on a Fourier analytic formula for the periodic $L^2$ discrepancy \cite{HKP,LE1}: for any finite point set $A_N=\{ a_1, a_2, \ldots, a_N \} \subset \mathbb{T}^d$, we have
\begin{equation}\label{L2discrepformula}
D_{\mathrm{per},2}^2 (A_N) = \frac{1}{3^d} \sum_{\substack{k \in \mathbb{Z}^d \\ k \neq 0}} \frac{1}{\prod_{j=1}^d r(k_j)} \left| \frac{1}{N} \sum_{n=1}^N e^{2 \pi i \langle k,a_n \rangle} \right|^2 ,
\end{equation}
where $r(k_j):=\max \{ 2 \pi^2 k_j^2 /3 ,1 \}$. For uniformly distributed i.i.d.\ points $Y=\{ Y_1, Y_2, \ldots, Y_N \}$ we have $\mathbb{E} \left| \sum_{n=1}^N e^{2 \pi i \langle k, Y_n \rangle} \right|^2 = N$, hence
\begin{equation*}
 \mathbb{E} D_{\mathrm{per},2}^2 (Y) = \frac{1}{3^d} \sum_{\substack{k \in \mathbb{Z}^d \\ k \neq 0}} \frac{1}{\prod_{j=1}^d r(k_j)} \cdot \frac{1}{N} = \frac{1}{3^d N} \left( \prod_{j=1}^d \sum_{k_j \in \mathbb{Z}} \frac{1}{r(k_j)} -1 \right) = \frac{1}{3^d N} \left( \frac{3^d}{2^d} -1 \right) . 
 \end{equation*}
In contrast, the variance of the exponential sum for the harmonic ensemble is the following.
\begin{lem}\label{harmonicvariancetoruslemma} For any $k \in \mathbb{Z}^d$, $k \neq 0$, the harmonic ensemble $X=X(T,\mathbb{T}^d)=\{ X_1, X_2, \ldots, X_N \}$ satisfies
\begin{equation*}
 \mathbb{E} \left| \sum_{n=1}^N e^{2 \pi i \langle k, X_n \rangle} \right|^2 = N - \prod_{j=1}^d \max \{ 2T+1-|k_j|, 0 \} . 
 \end{equation*}
\end{lem}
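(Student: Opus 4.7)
The plan is to apply the variance formula \eqref{varianceformula} with $f(x) = e^{2\pi i \langle k, x\rangle}$ and exploit the translation invariance of the kernel.

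First, since $K(x,x) = (2T+1)^d = N$ is constant and $|f(x)|^2 = 1$, the first term on the right hand side of \eqref{varianceformula} evaluates to $\int_{\mathbb{T}^d} K(x,x)\, \mathrm{d}\mathrm{Vol}(x) = N$. For the double integral, the contribution of $K(x,x)K(y,y) = N^2$ is
\[
N^2 \int_{\mathbb{T}^d}\int_{\mathbb{T}^d} e^{2\pi i \langle k, x-y\rangle}\, \mathrm{d}\mathrm{Vol}(x)\, \mathrm{d}\mathrm{Vol}(y) = N^2 \left| \int_{\mathbb{T}^d} e^{2\pi i \langle k, x\rangle}\, \mathrm{d}\mathrm{Vol}(x) \right|^2 = 0,
\]
since $k \neq 0$. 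Hence everything boils down to computing
\[
I(k) := \int_{\mathbb{T}^d}\int_{\mathbb{T}^d} |K(x,y)|^2 e^{2\pi i \langle k, x-y\rangle}\, \mathrm{d}\mathrm{Vol}(x)\, \mathrm{d}\mathrm{Vol}(y).
\]

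The key step is to expand $|K(x,y)|^2$ directly from the definition. Writing
\[
|K(x,y)|^2 = K(x,y)\overline{K(x,y)} = \sum_{\substack{m,m' \in \mathbb{Z}^d \\ \|m\|_{\infty},\, \|m'\|_{\infty} \le T}} e^{2\pi i \langle m-m', x-y\rangle},
\]
multiplying by $e^{2\pi i \langle k, x-y\rangle}$ and integrating over $(x,y) \in \mathbb{T}^d \times \mathbb{T}^d$, the orthogonality of the characters kills every term except those with $m - m' + k = 0$. Thus
\[
I(k) = \#\bigl\{ (m,m') \in \mathbb{Z}^d \times \mathbb{Z}^d \, : \, \|m\|_{\infty} \le T,\, \|m'\|_{\infty} \le T,\, m' = m+k \bigr\}.
\]

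This counting problem factors coordinate-wise: for each $1 \le j \le d$ we need $m_j \in [-T,T] \cap [-T - k_j, T - k_j]$, an interval of length $\max\{2T+1 - |k_j|,\, 0\}$ in $\mathbb{Z}$. Taking the product gives $I(k) = \prod_{j=1}^d \max\{2T+1 - |k_j|,\, 0\}$, and combining with the earlier contributions yields the claimed formula. No step looks like a serious obstacle; the calculation is essentially a Plancherel-type identity for the reproducing kernel, with the product structure of the box $\|m\|_{\infty} \le T$ being exactly what produces the factorized count.
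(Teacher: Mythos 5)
Your proposal is correct and follows essentially the same route as the paper: apply \eqref{varianceformula} with $f(x)=e^{2\pi i \langle k,x\rangle}$, note the $K(x,x)K(y,y)$ term vanishes for $k\neq 0$, expand $|K(x,y)|^2$ into characters, and use orthogonality to reduce the remaining integral to counting pairs of frequencies in $[-T,T]^d$ differing by $k$. Your coordinate-wise count is just a rephrasing of the paper's count of lattice points in the intersection of the cube with its translate by $k$, so there is nothing to add.
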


\begin{proof} By the definition of the kernel, $K(x,x)=(2T+1)^d=N$ is constant on the diagonal, and
\begin{equation*}
 |K(x,y)|^2 = \sum_{\substack{\ell, m \in \mathbb{Z}^d \\ \| \ell \|_{\infty}, \| m \|_{\infty} \le T}} e^{2 \pi i \langle \ell -m , x-y \rangle} . 
 \end{equation*}
An application of formula \eqref{varianceformula} with $f(x)=e^{2 \pi i \langle k,x \rangle}$ thus leads to
\begin{equation*}
 \begin{split} \mathbb{E} \left| \sum_{n=1}^N e^{2 \pi i \langle k, X_n \rangle} \right|^2 &= N + \int_{\mathbb{T}^d} \int_{\mathbb{T}^d} (N^2 - |K(x,y)|^2) e^{2 \pi i \langle k,x-y \rangle} \, \mathrm{d} \mathrm{Vol}(x) \, \mathrm{d} \mathrm{Vol} (y) \\ &= N - \sum_{\substack{\ell, m \in \mathbb{Z}^d \\ \| \ell \|_{\infty}, \| m \|_{\infty} \le T}} \int_{\mathbb{T}^d} \int_{\mathbb{T}^d} e^{2 \pi i \langle k+\ell -m, x-y \rangle} \, \mathrm{d} \mathrm{Vol}(x) \, \mathrm{d} \mathrm{Vol} (y) \\ &= N - \sum_{\substack{\ell, m \in \mathbb{Z}^d \\ \| \ell \|_{\infty}, \| m \|_{\infty} \le T \\ m-\ell =k}} 1 . \end{split} 
 \end{equation*}
The sum in the previous formula is simply the number of lattice points in the intersection of the cube $[-T,T]^d$ with its translate $[-T,T]^d+k$. This intersection is empty if $|k_j| \ge 2T+1$ for some $j$, and it is a lattice box of side lengths $2T-|k_j|$, $1 \le j \le d$ otherwise. The number of lattice points in the intersection is thus $\prod_{j=1}^d \max \{ 2T+1-|k_j|, 0 \}$, as claimed.
\end{proof}

\begin{proof}[Proof of Theorem \ref{L2torustheorem}] In dimension $d=1$, formula \eqref{L2discrepformula} and Lemma \ref{harmonicvariancetoruslemma} lead to the particularly simple explicit formula
\begin{equation}\label{onedimL2discrep}
\begin{split} \mathbb{E} D_{\mathrm{per},2}^2 (X) = \frac{1}{3} \sum_{\substack{k \in \mathbb{Z} \\ k \neq 0}} \frac{1}{2 \pi^2 k^2 /3} \cdot \frac{N-\max \{ N-|k|,0 \}}{N^2} &= \frac{1}{\pi^2 N^2} \sum_{k=1}^N \frac{1}{k} + \frac{1}{\pi^2 N} \sum_{k=N+1}^{\infty} \frac{1}{k^2} \\ &= \frac{\log N + \gamma +1}{\pi^2 N^2} + O \left( \frac{1}{N^3} \right) . \end{split}
\end{equation}
We now extend this to dimensions $d \ge 2$. Recall that $N=(2T+1)^d$. By formula \eqref{L2discrepformula} and Lemma \ref{harmonicvariancetoruslemma},
\begin{equation}\label{highdimL2discrep}
\mathbb{E} D_{\mathrm{per},2}^2 (X) = \frac{1}{3^d} \sum_{\substack{k \in \mathbb{Z}^d \\ k \neq 0}} \frac{1}{\prod_{j=1}^d r(k_j)} \cdot \frac{N-\prod_{j=1}^d \max \{ 2T+1-|k_j| ,0 \}}{N^2} .
\end{equation}
Consider first the terms $k \in [-N^{\frac{1}{d}},N^{\frac{1}{d}}]^d$, $k \neq 0$. Expanding the product leads to
\begin{equation*}
 \prod_{j=1}^d (2T+1-|k_j|) = (2T+1)^d - (2T+1)^{d-1} \sum_{j=1}^d |k_j| + (2T+1)^{d-2} \sum_{\substack{j_1, j_2 =1 \\ j_1 \neq j_2}}^d |k_{j_1}| \cdot |k_{j_2}| + \cdots , 
 \end{equation*}\
 therefore
\begin{equation}\label{variancefactor}
N-\prod_{j=1}^d (2T+1-|k_j|) = N^{1-\frac{1}{d}} \sum_{j=1}^d |k_j| - N^{1-\frac{2}{d}} \sum_{\substack{j_1, j_2 =1 \\ j_1 \neq j_2}}^d |k_{j_1}| \cdot |k_{j_2}| + \cdots .
\end{equation}
The cancellation of the main term $N$ is the manifestation of the repulsive nature of the system in Fourier space. The main contribution comes from the first term in \eqref{variancefactor}:
\begin{equation*}
 \begin{split} \frac{1}{3^d} \sum_{\substack{k \in [-N^{\frac{1}{d}},N^{\frac{1}{d}}]^d \\ k \neq 0}} \frac{1}{\prod_{j=1}^d r(k_j)} \cdot \frac{\sum_{j=1}^d |k_j|}{N^{1+\frac{1}{d}}} &= \frac{d}{3^d N^{1+\frac{1}{d}}} \sum_{\substack{k \in [-N^{\frac{1}{d}}, N^{\frac{1}{d}}]^d \\ k_1 \neq 0}} \frac{|k_1|}{\prod_{j=1}^d r(k_j)} \\ &= \frac{d}{3^d N^{1+\frac{1}{d}}} \bigg( \sum_{\substack{-N^{\frac{1}{d}} \le k_1 \le N^{\frac{1}{d}} \\ k_1 \neq 0}} \frac{3}{2 \pi^2 |k_1|} \bigg) \prod_{j=2}^{d} \sum_{-N^{\frac{1}{d}} \le k_j \le N^{\frac{1}{d}}} \frac{1}{r(k_j)} \\ &= \frac{d}{3^d N^{1+\frac{1}{d}}} \left( \frac{3 \log N^{\frac{1}{d}} + 3 \gamma}{\pi^2} + O \left( \frac{1}{N^{\frac{1}{d}}} \right) \right) \left( \frac{3}{2} + O \left( \frac{1}{N^{\frac{1}{d}}} \right) \right)^{d-1} \\ &= \frac{\log N + \gamma d}{2^{d-1} \pi^2 N^{1+\frac{1}{d}}} + O \left( \frac{\log N}{N^{1+\frac{2}{d}}} \right) . \end{split} 
 \end{equation*}
 The contribution of the second term in \eqref{variancefactor} is negligible:
\begin{equation*}
 \frac{1}{3^d} \sum_{\substack{k \in [-N^{\frac{1}{d}},N^{\frac{1}{d}}]^d \\ k \neq 0}} \frac{1}{\prod_{j=1}^d r(k_j)} \cdot \frac{\sum_{j_1 \neq j_2} |k_{j_1}| \cdot |k_{j_2}|}{N^{1+\frac{2}{d}}} = \frac{\binom{d}{2}}{3^d N^{1+\frac{2}{d}}} \sum_{\substack{k \in [-N^{\frac{1}{d}},N^{\frac{1}{d}}]^d \\ k \neq 0}} \frac{|k_1| \cdot |k_2|}{\prod_{j=1}^d r(k_j)} \ll \frac{(\log N)^2}{N^{1+\frac{2}{d}}} . 
 \end{equation*}
All other terms in \eqref{variancefactor} are easily seen to have an even smaller contribution. The total contribution of all $k \in [-N^{\frac{1}{d}},N^{\frac{1}{d}}]^d$, $k \neq 0$ in \eqref{highdimL2discrep} is thus
\begin{equation*}
 \frac{1}{3^d} \sum_{\substack{k \in [-N^{\frac{1}{d}}, N^{\frac{1}{d}}]^d \\ k \neq 0}} \frac{1}{\prod_{j=1}^d r(k_j)} \cdot \frac{N-\prod_{j=1}^d \max \{ 2T+1-|k_j| ,0 \}}{N^2} = \frac{\log N + \gamma d}{2^{d-1} \pi^2 N^{1+\frac{1}{d}}} + O \left( \frac{(\log N)^2}{N^{1+\frac{2}{d}}} \right) . 
 \end{equation*}

Note that for all other terms $k \in \mathbb{Z}^d \backslash [-N^{\frac{1}{d}}, N^{\frac{1}{d}}]^d$, we have $N-\prod_{j=1}^d \max \{ 2T+1-|k_j| ,0 \} =N$. The contribution of all terms with $|k_1|>N^{1/d}$ and $|k_2|, \ldots, |k_d| \le N^{\frac{1}{d}}$ in \eqref{highdimL2discrep} is
\begin{equation*}
 \begin{split} \frac{1}{3^d} \sum_{\substack{|k_1|>N^{1/d} \\ |k_2|, \ldots, |k_d| \le N^{\frac{1}{d}}}} \frac{1/N}{\prod_{j=1}^d r(k_j)} &= \frac{1}{3^d N} \sum_{|k_1| >N^{1/d}} \frac{3}{2 \pi^2 k_1^2} \prod_{j=2}^d \sum_{|k_j| \le N^{1/d}} \frac{1}{r(k_j)} \\ &= \frac{1}{3^d N} \left( \frac{3}{\pi^2 N^{\frac{1}{d}}} + O \left( \frac{1}{N^{\frac{2}{d}}} \right) \right) \left( \frac{3}{2} + O \left( \frac{1}{N^{\frac{1}{d}}} \right) \right)^{d-1} \\ &= \frac{1}{2^{d-1} \pi^2 N^{1+\frac{1}{d}}} + O \left( \frac{1}{N^{1+\frac{2}{d}}} \right) . \end{split} 
 \end{equation*}
For any given $1 \le j \le d$, the same holds for the terms with $|k_j|>N^{\frac{1}{d}}$, $|k_{j'}| \le N^{\frac{1}{d}}$, $j' \neq j$. The contribution of all terms $k \in \mathbb{Z}^d \backslash [-N^{\frac{1}{d}}, N^{\frac{1}{d}}]^d$ with at least two coordinates being $>N^{\frac{1}{d}}$ in absolute value, is similarly seen to be $O(N^{-1-\frac{2}{d}})$. The total contribution of all $k \in \mathbb{Z}^d \backslash [-N^{\frac{1}{d}}, N^{\frac{1}{d}}]^d$ in \eqref{highdimL2discrep} is thus
\begin{equation*}
 \frac{1}{3^d} \sum_{k \in \mathbb{Z}^d \backslash [-N^{\frac{1}{d}}, N^{\frac{1}{d}}]^d} \frac{1}{\prod_{j=1}^d r(k_j)} \cdot \frac{N-\prod_{j=1}^d \max \{ 2T+1-|k_j| ,0 \}}{N^2} = \frac{d}{2^{d-1} \pi^2 N^{1+\frac{1}{d}}} + O \left( \frac{1}{N^{1+\frac{2}{d}}} \right) . 
 \end{equation*}
Formula \eqref{highdimL2discrep} finally shows that
\begin{equation*}
 \mathbb{E} D_{\mathrm{per},2}^2 (X) = \frac{\log N + (\gamma +1)d}{2^{d-1} \pi^2 N^{1+\frac{1}{d}}} + O \left( \frac{(\log N)^2}{N^{1+\frac{2}{d}}} \right) , 
 \end{equation*}
as claimed.
\end{proof}

\subsection{Ball $L^2$ discrepancy}

In this section, we estimate the ball $L^2$ discrepancy of the harmonic ensemble on $\mathbb{T}^d$, and prove Theorem \ref{ballL2torustheorem}. Our starting point is a Fourier analytic formula for the ball $L^2$ discrepancy, similar to the diaphony \eqref{L2discrepformula}.
\begin{lem}\label{balldiaphonylemma} For any finite point set $A_N = \{ a_1, \ldots, a_N \} \subset \mathbb{T}^d$, we have
\[ D_{\mathrm{ball},2}^2 (A_N) = \sum_{\substack{k \in \mathbb{Z}^d \\ k \neq 0}} b_k \left|\frac{1}{N} \sum_{n=1}^N e^{2 \pi i \langle k,a_n \rangle} \right|^2 \]
with some positive real numbers $b_k=1/(d 2^d \pi^2 |k|^{d+1}) + O(1/|k|^{d+2})$.
\end{lem}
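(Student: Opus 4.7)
The plan is to write the $L^2$ discrepancy in a convolution form and apply Parseval's identity on $\mathbb{T}^d$, then recognize the resulting radial Fourier coefficients as Bessel integrals.

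\medskip

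\textbf{Step 1: convolution form of the local discrepancy.} For fixed $r \in (0, 1/2]$, set
\[ f_r(x) = \frac{1}{N} \sum_{n=1}^N \mathbbm{1}_{B(x,r)}(a_n) - \mathrm{Vol}(B(x,r)). \]
The group structure of $\mathbb{T}^d$ gives $\mathbbm{1}_{B(x,r)}(a_n) = g_r(a_n-x)$ with $g_r := \mathbbm{1}_{B(0,r)}$. Since $r \le 1/2$, the geodesic ball $B(0,r)$ coincides with the Euclidean ball of radius $r$ sitting inside $[-1/2,1/2]^d \subset \mathbb{T}^d$. The Fourier expansion of $g_r$ on $\mathbb{T}^d$ therefore reads
\[ g_r(y) = \sum_{k \in \mathbb{Z}^d} \widehat{g_r}(k) e^{2\pi i \langle k, y\rangle}, \qquad \widehat{g_r}(k) = \int_{|y| \le r} e^{-2\pi i \langle k, y\rangle} \, \mathrm{d}y, \]
and $\widehat{g_r}(0) = \mathrm{Vol}(B(0,r))$, so the zero mode cancels the second term in $f_r$.

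\medskip

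\textbf{Step 2: Parseval and integration in $r$.} Writing $S_k := \frac{1}{N}\sum_n e^{2\pi i\langle k, a_n\rangle}$, I get $f_r(x) = \sum_{k \ne 0} \widehat{g_r}(k) S_k e^{-2\pi i \langle k, x\rangle}$, and Parseval yields
\[ \int_{\mathbb{T}^d} |f_r(x)|^2 \, \mathrm{d}\mathrm{Vol}(x) = \sum_{k \ne 0} |\widehat{g_r}(k)|^2 |S_k|^2. \]
Integrating against $2\,\mathrm{d}r$ on $[0, 1/2]$ and invoking Tonelli gives the claimed formula with
\[ b_k = 2 \int_0^{1/2} |\widehat{g_r}(k)|^2 \, \mathrm{d}r, \]
which is manifestly nonnegative; strict positivity will follow from the Bessel computation below.

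\medskip

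\textbf{Step 3: Bessel identification.} By the standard formula for the Fourier transform of the Euclidean ball's indicator,
\[ \widehat{g_r}(k) = r^{d/2} |k|^{-d/2} J_{d/2}(2\pi r |k|) \qquad (k \ne 0). \]
Hence, changing variables $u = 2\pi r |k|$,
\[ b_k = \frac{2}{(2\pi)^{d+1} |k|^{2d+1}} \int_0^{\pi |k|} u^d J_{d/2}(u)^2 \, \mathrm{d}u. \]

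\medskip

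\textbf{Step 4: asymptotic of the Bessel integral.} Using the large-$u$ expansion
\[ J_{d/2}(u)^2 = \frac{1}{\pi u}\bigl(1 + \sin(2u - d\pi/2)\bigr) + O(u^{-2}), \]
the leading part contributes $\int_0^{\pi|k|} \frac{u^{d-1}}{\pi}\, \mathrm{d}u = \frac{\pi^{d-1} |k|^d}{d}$, whereas the oscillatory term $u^{d-1}\sin(2u - d\pi/2)/\pi$ integrates by parts to $O(|k|^{d-1})$, and the $O(u^{-2})$ remainder contributes $O(|k|^{d-1})$. Dividing by $(2\pi)^{d+1}|k|^{2d+1}$ and simplifying gives
\[ b_k = \frac{1}{d \cdot 2^d \pi^2 |k|^{d+1}} + O\!\left(\frac{1}{|k|^{d+2}}\right), \]
as claimed. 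Positivity of $b_k$ is clear from its definition as an $L^2$ norm.

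\medskip

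\textbf{Main obstacle.} The one delicate point is the sharpness of the error term: one must carefully carry out the integration-by-parts on $\int u^{d-1} \sin(2u - d\pi/2)\, \mathrm{d}u$ to confirm that the oscillatory contribution is indeed only $O(|k|^{d-1})$ (and not, say, $O(|k|^{d-1/2})$). The rest is bookkeeping with the Bessel asymptotics and scaling constants.
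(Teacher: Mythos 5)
Your proposal is correct and follows essentially the same route as the paper: expand the indicator of the ball in a Fourier series (the paper expands $I_{B(x,r)}(y)$ in $x$ for fixed $y=a_n$, which is the same as your convolution formulation), apply Parseval and integrate in $r$, identify $b_k$ via the Bessel-function Fourier transform of the Euclidean ball, and extract the leading term from the large-argument asymptotics of $J_{d/2}^2$ (your $\tfrac{1}{\pi u}(1+\sin(2u-d\pi/2))$ is the paper's $\tfrac{2}{\pi u}\cos^2(u-(d+1)\pi/4)$, and your constant $\tfrac{2}{(2\pi)^{d+1}}$ equals the paper's $\tfrac{1}{2^d\pi^{d+1}}$). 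The integration-by-parts point you flag as the main obstacle is handled in exactly this way (implicitly) in the paper as well, so there is nothing further to add.
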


\begin{proof} Using the formula for the Fourier transform of the indicator function of the Euclidean unit ball \cite[p.\ 155]{SW}, we immediately deduce that for any $0< r \le 1/2$, the Fourier coefficients of the indicator function $I_{B(0,r)}$ of the ball $B(0,r) \subset \mathbb{T}^d$ are
\[ \int_{\mathbb{T}^d} I_{B(0,r)} (x) e^{-2 \pi i \langle k,x \rangle} \, \mathrm{d}\mathrm{Vol}(x) = r^{d/2} |k|^{-d/2} J_{d/2}(2 \pi r |k|), \qquad k \in \mathbb{Z}^d, k \neq 0, \]
where $J_{\alpha}$ are the Bessel functions of the first kind. Since $I_{B(x,r)}(y) = I_{B(0,r)}(x-y)$, for any fixed $y \in \mathbb{T}^d$ we obtain the Fourier series expansion in the variable $x$
\[ I_{B(x,r)}(y) = \mathrm{Vol} (B(0,r)) + \sum_{\substack{k \in \mathbb{Z}^d \\ k \neq 0}} r^{d/2} |k|^{-d/2} J_{d/2}(2 \pi r |k|) e^{2 \pi i \langle k,x-y \rangle} . \]
Setting $y=a_n$ and then summing over $1 \le n \le N$ leads to the Fourier series expansion
\begin{multline*}
\frac{|\{ 1 \le n \le N \, : \, a_n \in B(x,r) \}|}{N} - \mathrm{Vol}(B(x,r)) \\ = \sum_{\substack{k \in \mathbb{Z}^d \\ k \neq 0}} r^{d/2} |k|^{-d/2} J_{d/2}(2 \pi r |k|) \left( \frac{1}{N} \sum_{n=1}^N e^{-2 \pi i \langle k,a_n \rangle} \right) e^{2 \pi i \langle k, x \rangle} .
\end{multline*}
An application of the Parseval formula thus yields the desired formula with
\[ b_k = 2 \int_0^{1/2} r^d |k|^{-d} J_{d/2}^2 (2 \pi r |k|) \, \mathrm{d}r = \frac{1}{2^d \pi^{d+1} |k|^{2d+1}} \int_0^{\pi |k|} x^d J_{d/2}^2 (x) \, \mathrm{d}x . \]
The asymptotics of the Bessel functions \cite[p.\ 209]{AAR} shows that
\[ x^d J_{d/2}^2 (x) = \frac{2x^{d-1}}{\pi} \cos^2 \left( x-\frac{(d+1)\pi}{4} \right) + O(x^{d-2}) \qquad \textrm{as } x \to \infty. \]
Hence
\[  \int_0^{\pi |k|} x^d J_{d/2}^2 (x) \, \mathrm{d}x = \frac{\pi^{d-1} |k|^d}{d} + O (|k|^{d-1}) , \]
and the claim follows.
\end{proof}

For uniformly distributed i.i.d.\ points $Y=\{ Y_1, Y_2, \ldots, Y_N \}$ we have $\mathbb{E} \left| \sum_{n=1}^N e^{2 \pi i \langle k, Y_n \rangle} \right|^2 = N$, hence $\sqrt{\mathbb{E} D_{\mathrm{ball},2}^2 (Y)} = c_d N^{-1/2}$ with the constant $c_d=\left( \sum_{k \in \mathbb{Z}^d, \, k \neq 0} b_k \right)^{1/2}$.

\begin{proof}[Proof of Theorem \ref{ballL2torustheorem}] Recall that $N=(2T+1)^d$. Lemmas \ref{harmonicvariancetoruslemma} and \ref{balldiaphonylemma} immediately yield
\begin{equation}\label{ballL2explicit}
\mathbb{E} D_{\mathrm{ball},2}^2 (X) = \sum_{\substack{k \in \mathbb{Z}^d \\ k \neq 0}} b_k \frac{N-\prod_{j=1}^d \max\{ 2T+1-|k_j|,0 \}}{N^2}
\end{equation}
with some positive real numbers $b_k=1/(d 2^d \pi^2 |k|^{d+1}) + O(1/|k|^{d+2})$. First note that the contribution of the terms $|k|>N^{1/d}$ in \eqref{ballL2explicit} is negligible:
\begin{equation}\label{ballestimate1}
\sum_{|k|>N^{1/d}} b_k \frac{N-\prod_{j=1}^d \max\{ 2T+1-|k_j|,0 \}}{N^2} \ll \sum_{|k|>N^{1/d}} \frac{1}{|k|^{d+1}N} \ll \frac{1}{N^{1+\frac{1}{d}}} .
\end{equation}

Consider now the terms $0<|k|\le N^{1/d}$ in \eqref{ballL2explicit}. For all such terms, $|k_j| \le 2T+1$ for all $1 \le j \le d$, and formula \eqref{variancefactor} holds. The main contribution comes from the first term on the right hand side of \eqref{variancefactor}:
\begin{equation}\label{ballestimate2}
\begin{split} \sum_{0<|k| \le N^{1/d}} b_k \frac{\sum_{j=1}^d |k_j|}{N^{1+\frac{1}{d}}} &= \sum_{0<|k| \le N^{1/d}} \frac{\sum_{j=1}^d |k_j|}{d 2^d \pi^2 |k|^{d+1} N^{1+\frac{1}{d}}} + O \left( \sum_{0<|k| \le N^{1/d}} \frac{1}{|k|^{d+2}} \cdot \frac{|k|}{N^{1+\frac{1}{d}}} \right) \\ &= \frac{1}{2^d \pi^2 N^{1+\frac{1}{d}}} \sum_{0<|k|\le N^{1/d}} \frac{|k_1|}{|k|^{d+1}} + O \left( \frac{1}{N^{1+\frac{1}{d}}} \right) . \end{split}
\end{equation}
The previous sum is easily estimated in dimension $d=1$. We now show how to estimate it in dimensions $d \ge 2$. Fix $k_1 \neq 0$, and let us estimate the inner sum over $k_2, \ldots, k_d$ with the corresponding integral. Let $f(x)=|k_1|/(k_1^2+|x|^2)^{(d+1)/2}$, $x \in \mathbb{R}^{d-1}$. The gradient has norm $|\nabla f(x)| = (d+1) |k_1| \cdot |x|/(k_1^2 + |x|^2)^{(d+3)/2}$, therefore
\[ |f(x) - f(k_2, \ldots, k_d)| \ll \frac{|k_1| \cdot |x|}{(k_1^2 + |x|^2)^{(d+3)/2}} \quad \textrm{for } x - (k_2, \ldots, k_d) \in \left[ - \frac{1}{2}, \frac{1}{2} \right]^{d-1} . \]
Note that
\[ \int_{\mathbb{R}^{d-1}} \frac{|k_1| \cdot |x|}{(k_1^2 + |x|^2)^{(d+3)/2}} \, \mathrm{d}x \ll \int_0^{\infty} \frac{|k_1|R}{(k_1^2+R^2)^{(d+3)/2}} R^{d-2} \, \mathrm{d}R \ll \frac{1}{k_1^2} , \]
and that
\[ \int_{\{ x \in \mathbb{R}^{d-1} \, : \, \left| |x| - \sqrt{N^{2/d} - k_1^2} \right| \ll 1 \}} \frac{|k_1|}{(k_1^2+|x|^2)^{(d+1)/2}} \, \mathrm{d} x \ll \frac{|k_1|}{N^{3/d}} \ll \frac{1}{k_1^2} . \]
Integrating over the unit cube centered at the lattice point $(k_2, \ldots, k_d)$ and then summing over $(k_2, \ldots, k_d)$ thus leads to
\[ \begin{split} \sum_{\substack{k_2, \ldots, k_d \in \mathbb{Z} \\ k_2^2+\cdots +k_d^2 \le N^{2/d}-k_1^2}} &\frac{|k_1|}{(k_1^2 + k_2^2 + \cdots +k_d^2)^{(d+1)/2}} \\ &= \int_{\{ x \in \mathbb{R}^{d-1} \, : \, |x| \le \sqrt{N^{2/d} - k_1^2} \}} \frac{|k_1|}{(k_1^2+|x|^2)^{(d+1)/2}} \, \mathrm{d} x + O \left( \frac{1}{k_1^2} \right) \\ &= \int_0^{\sqrt{N^{2/d}-k_1^2}} \frac{|k_1|}{(k_1^2+R^2)^{(d+1)/2}} \mathrm{Vol} (\mathbb{S}^{d-2}) R^{d-2} \, \mathrm{d}R + O \left( \frac{1}{k_1^2} \right) \\ &= \frac{\mathrm{Vol} (\mathbb{S}^{d-2})}{|k_1|} \int_0^{\sqrt{N^{2/d}/k_1^2 -1}} \frac{R^{d-2}}{(1+R^2)^{(d+1)/2}} \, \mathrm{d}R + O \left( \frac{1}{k_1^2} \right) . \end{split} \]
Replacing the upper limit of integration in the previous integral by infinity yields
\[ \int_0^{\sqrt{N^{2/d}/k_1^2 -1}} \frac{R^{d-2}}{(1+R^2)^{(d+1)/2}} \, \mathrm{d}R = \int_0^{\infty} \frac{R^{d-2}}{(1+R^2)^{(d+1)/2}} \, \mathrm{d}R + O \left( \frac{k_1^2}{N^{2/d}} \right) = \frac{1}{d-1} +  O \left( \frac{k_1^2}{N^{2/d}} \right) . \]
Summing over $0<|k_1| \le N^{1/d}$ thus leads to
\[ \begin{split} \sum_{0<|k|\le N^{1/d}} \frac{|k_1|}{|k|^{d+1}} &= \frac{\mathrm{Vol}(\mathbb{S}^{d-2})}{d-1} \sum_{0<|k_1| \le N^{1/d}} \frac{1}{|k_1|} + O \left( \sum_{0<|k_1|\le N^{1/d}} \frac{1}{k_1^2} + \sum_{0<|k_1| \le N^{1/d}} \frac{|k_1|}{N^{2/d}} \right) \\ &= \frac{2 \pi^{\frac{d-1}{2}}}{d \Gamma (\frac{d+1}{2})} \log N + O(1) . \end{split} \]
Note that the previous formula holds in dimension $d=1$ as well. Formula \eqref{ballestimate2} thus simplifies to
\[ \sum_{0<|k| \le N^{1/d}} b_k \frac{\sum_{j=1}^d |k_j|}{N^{1+\frac{1}{d}}} = \frac{\pi^{\frac{d-5}{2}}}{d 2^{d-1} \Gamma \left( \frac{d+1}{2} \right)} \cdot \frac{\log N}{N^{1+\frac{1}{d}}} + O \left( \frac{1}{N^{1+\frac{1}{d}}} \right) . \]

The second term on the right hand side of \eqref{variancefactor} is negligible:
\[ \sum_{0<|k| \le N^{1/d}} b_k \frac{\sum_{j_1 \neq j_2} |k_{j_1}| \cdot |k_{j_2}|}{N^{1+\frac{2}{d}}} \ll \sum_{0<|k| \le N^{1/d}} \frac{1}{|k|^{d-1}N^{1+\frac{2}{d}}} \ll \frac{1}{N^{1+\frac{1}{d}}} . \]
A similar argument shows that the contribution of all later terms on the right hand side of \eqref{variancefactor} is also $\ll 1/N^{1+\frac{1}{d}}$. Hence
\[ \sum_{0<|k| \le N^{1/d}} b_k \frac{N-\prod_{j=1}^d \max\{ 2T+1-|k_j|,0 \}}{N^2} = \frac{\pi^{\frac{d-5}{2}}}{d 2^{d-1} \Gamma \left( \frac{d+1}{2} \right)} \cdot \frac{\log N}{N^{1+\frac{1}{d}}} + O \left( \frac{1}{N^{1+\frac{1}{d}}} \right) , \]
which together with \eqref{ballL2explicit} and \eqref{ballestimate1} prove the claim.
\end{proof}

\section{Wasserstein metric}\label{sec:Wasserstein}

The main tool in estimating the harmonic and spherical ensembles in the quadratic Wasserstein metric is a smoothing inequality for $W_2$ on compact manifolds due to the first author \cite{BO}. In the special case of the sphere, it states that for any finite set $A_N=\{ a_1, a_2, \ldots, a_N \} \subset \mathbb{S}^2$ and any real $t>0$,
\begin{equation}\label{spheresmoothing}
W_2 (A_N, \mathrm{Vol}/(4 \pi)) \le (2t)^{1/2} + 2 \left( \sum_{\ell =1}^{\infty} \frac{e^{-\ell (\ell +1) t}}{\ell (\ell +1)} \sum_{m=-\ell}^{\ell} \left| \frac{1}{N} \sum_{n=1}^N Y_{\ell}^m (a_n) \right|^2 \right)^{1/2} .
\end{equation}
Similarly, for any finite set $A_N=\{ a_1, a_2, \ldots, a_N \} \subset \mathbb{T}^2$ and any real $t>0$,
\begin{equation}\label{torussmoothing}
W_2 (A_N,\mathrm{Vol}) \le (2t)^{1/2} + 2 \left( \sum_{\substack{k \in \mathbb{Z}^2 \\ k \neq 0}} \frac{e^{-4 \pi^2 |k|^2 t}}{4 \pi^2 |k|^2} \left| \frac{1}{N} \sum_{n=1}^N e^{2 \pi i \langle k,a_n \rangle} \right|^2 \right)^{1/2} .
\end{equation}

Let $P_{\ell}(t)$, $\ell \ge 0$ denote the Legendre polynomials with the standard normalization $P_{\ell}(1)=1$. They are the special case of the Jacobi polynomials $P_{\ell}^{(\alpha, \beta)}(t)$ with $\alpha=\beta=0$, and by \eqref{orthogonality} satisfy the orthogonality relation
\begin{equation*}
 \int_{-1}^1 P_{\ell}(t) P_k (t) \, \mathrm{d}t = \frac{2}{2\ell +1} \delta_{\ell k} . 
 \end{equation*}
Most importantly, Legendre polynomials appear in the addition formula for spherical harmonics: for any $\ell \ge 0$ and any $x,y \in \mathbb{S}^2$,
\begin{equation}\label{addition}
\sum_{m=-\ell}^{\ell} Y_{\ell}^m (x) \overline{Y_{\ell}^m(y)} = \frac{2\ell +1}{4 \pi} P_{\ell} (\langle x,y \rangle) .
\end{equation}

\subsection{Harmonic ensemble on $\mathbb{S}^2$}

Consider the harmonic ensemble $X=X(L, \mathbb{S}^2)=\{ X_1, X_2, \ldots, X_N \}$. The number of points is now $N=(L+1)^2$. We start with a variance estimate, and then give the proof of Theorem \ref{thm: 2dimWassersteintheorem} (i).
\begin{lem}\label{harmonicvariancespherelemma} For any $\ell >0$,
\begin{equation*}
 \sum_{m=-\ell}^{\ell} \mathbb{E} \left| \sum_{n=1}^N Y_{\ell}^m (X_n) \right|^2 \le \frac{2^{3/2} \cdot 3}{\pi^2} \ell (\ell +1) N^{1/2} . 
 \end{equation*}
\end{lem}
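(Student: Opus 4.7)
The plan is to combine the variance formula \eqref{varianceformula} with the addition formula \eqref{addition}. Applying \eqref{varianceformula} with $f = Y_\ell^m$ and summing over $m \in \{-\ell, \ldots, \ell\}$: since $K(x,x) = N/(4\pi)$ is constant, the diagonal piece gives exactly $N(2\ell+1)/(4\pi)$, while the constant $K(x,x)K(y,y)$ piece integrates to zero against $Y_\ell^m$ for $\ell > 0$. The addition formula then collapses the remaining $|K(x,y)|^2$ part, leaving
\[
\sum_{m=-\ell}^{\ell}\mathbb{E}\Big|\sum_{n=1}^N Y_\ell^m(X_n)\Big|^2 = \frac{N(2\ell+1)}{4\pi} - \frac{2\ell+1}{4\pi}\iint_{\mathbb{S}^2\times\mathbb{S}^2}|K(x,y)|^2 P_\ell(\langle x,y\rangle)\,\mathrm{d}\mathrm{Vol}(x)\,\mathrm{d}\mathrm{Vol}(y).
\]

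The second step is to reduce the double integral to a one-dimensional one. By \eqref{addition}, $K(x,y) = k(\langle x,y\rangle)$ with $k(t) = \sum_{\ell'=0}^L \frac{2\ell'+1}{4\pi}P_{\ell'}(t)$ real, so rotation invariance of $\mathrm{Vol}$ yields $\iint |K|^2 P_\ell(\langle x,y\rangle)\,\mathrm{d}\mathrm{Vol}\,\mathrm{d}\mathrm{Vol} = 8\pi^2 \int_{-1}^1 k(t)^2 P_\ell(t)\,\mathrm{d}t$. Expanding $k(t)^2$ via the Legendre linearization $P_{\ell'}(t)P_{k'}(t) = \sum_j (2j+1)\binom{\ell'\,k'\,j}{0\,0\,0}^2 P_j(t)$ and using Legendre orthogonality (\eqref{orthogonality} with $\alpha=\beta=0$) to isolate the $j=\ell$ coefficient gives
\[
\iint_{\mathbb{S}^2\times\mathbb{S}^2} |K(x,y)|^2 P_\ell(\langle x,y\rangle)\,\mathrm{d}\mathrm{Vol}(x)\,\mathrm{d}\mathrm{Vol}(y) = \sum_{\ell',k'=0}^L (2\ell'+1)(2k'+1)\binom{\ell'\,k'\,\ell}{0\,0\,0}^2.
\]

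The key cancellation comes from the Wigner 3j orthogonality identity $\sum_{k'\geq 0}(2k'+1)\binom{\ell'\,k'\,\ell}{0\,0\,0}^2 = 1$, valid for every $\ell',\ell\geq 0$. Rewriting the inner sum over $k' \le L$ as $1$ minus the $k'>L$ tail and summing against $\sum_{\ell'=0}^L (2\ell'+1) = N$ cancels the leading term $N(2\ell+1)/(4\pi)$ exactly, leaving
\[
\sum_{m=-\ell}^{\ell}\mathbb{E}\Big|\sum_{n=1}^N Y_\ell^m(X_n)\Big|^2 = \frac{2\ell+1}{4\pi}\sum_{\ell'=0}^L(2\ell'+1)\sum_{k'>L}(2k'+1)\binom{\ell'\,k'\,\ell}{0\,0\,0}^2.
\]

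Finally, to bound the tail: the 3j symbol vanishes unless $|\ell'-\ell|\leq k' \leq \ell'+\ell$, so $k' > L$ forces $\ell' > L-\ell$, limiting the $\ell'$ sum to at most $\ell$ terms. For each such $\ell'$, the inner tail is at most $1$ by the same 3j identity. Hence the right-hand side is bounded by $\frac{2\ell+1}{4\pi}\sum_{\ell'=L-\ell+1}^L (2\ell'+1) = \frac{(2\ell+1)\ell(2L-\ell+2)}{4\pi} \leq \frac{(2\ell+1)\cdot 2\ell(L+1)}{4\pi} = \frac{\ell(2\ell+1)\sqrt{N}}{2\pi} \leq \frac{\ell(\ell+1)\sqrt{N}}{\pi}$, and since $\pi < 6\sqrt{2}$ this lies within the claimed constant $2^{3/2}\cdot 3/\pi^2$. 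The main obstacle is identifying the right form of the Wigner 3j orthogonality to produce the exact cancellation with the $N(2\ell+1)/(4\pi)$ term; once that is in place, the remaining estimate is combinatorial bookkeeping driven by the triangle inequality.
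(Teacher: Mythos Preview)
Your argument is correct and in fact sharper than the paper's. The derivation of the tail identity
\[
\sum_{m=-\ell}^{\ell}\mathbb{E}\Big|\sum_{n=1}^N Y_\ell^m(X_n)\Big|^2 = \frac{2\ell+1}{4\pi}\sum_{\ell'=0}^L(2\ell'+1)\sum_{k'>L}(2k'+1)\begin{pmatrix}\ell' & k' & \ell \\ 0 & 0 & 0\end{pmatrix}^2
\]
coincides with the paper's formula \eqref{explicitvariance2}: your Wigner $3j$ orthogonality $\sum_{k'}(2k'+1)\bigl(\begin{smallmatrix}\ell' & k' & \ell\\ 0&0&0\end{smallmatrix}\bigr)^2=1$ is precisely the paper's observation that $\sum_{\ell_2}\tfrac{2\ell_2+1}{2}\int_{-1}^1 P_{\ell_1}P_{\ell_2}P_\ell\,\mathrm{d}t=P_{\ell_1}(1)P_\ell(1)=1$. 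Where the two proofs diverge is in bounding the tail. The paper invokes the explicit Gaunt formula for $\int_{-1}^1 P_{\ell_1}P_{\ell_2}P_\ell\,\mathrm{d}t$ in terms of central binomial coefficients, applies pointwise estimates on those binomials, and splits into the cases $\ell\le L/2$ and $\ell>L/2$. Your observation that each inner $k'$-tail is trivially at most $1$ (by the same $3j$ identity) together with the triangle constraint bypasses all of this and gives the cleaner constant $1/\pi$ in place of $2^{3/2}\cdot 3/\pi^2$.

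One small point to patch: the equality $\sum_{\ell'=L-\ell+1}^L(2\ell'+1)=\ell(2L-\ell+2)$ tacitly assumes $\ell\le L+1$; for $\ell>L+1$ the lower index should be $0$ and the sum equals $N$ rather than $\ell(2L-\ell+2)$. This is harmless, since in that range the trivial bound $\frac{(2\ell+1)N}{4\pi}$ combined with $\sqrt{N}=L+1<\ell$ already gives $\frac{(2\ell+1)N}{4\pi}\le\frac{(2\ell+1)\ell\sqrt{N}}{4\pi}\le\frac{\ell(\ell+1)\sqrt{N}}{2\pi}$, well within the claimed bound.
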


\begin{proof} By the addition formula \eqref{addition}, the kernel is $K(x,y) = \sum_{\ell =0}^L \frac{2 \ell +1}{4 \pi} P_{\ell} (\langle x,y \rangle)$. In particular, it is constant on the diagonal: $K(x,x) = N/(4 \pi)$. An application of \eqref{varianceformula} with $f(x)=Y_{\ell}^m (x)$ thus gives
\begin{equation*}
 \mathbb{E} \left| \sum_{n=1}^N Y_{\ell}^m (X_n) \right|^2 = \frac{N}{4 \pi} - \int_{\mathbb{S}^2} \int_{\mathbb{S}^2} |K(x,y)|^2 Y_{\ell}^m (x) \overline{Y_{\ell}^m (y)} \, \mathrm{d} \mathrm{Vol} (x) \, \mathrm{d} \mathrm{Vol} (y) ,  
 \end{equation*}
and the addition formula \eqref{addition} yields the explicit formula
\begin{equation}\label{explicitvariance1}
\begin{split}  \sum_{m=-\ell}^{\ell} &\mathbb{E} \left| \sum_{n=1}^N Y_{\ell}^m (X_n) \right|^2 \\ &= \frac{(2 \ell +1)N}{4 \pi} - \int_{\mathbb{S}^2} \int_{\mathbb{S}^2} |K(x,y)|^2 \frac{2 \ell +1}{4 \pi} P_{\ell} (\langle x,y \rangle) \, \mathrm{d} \mathrm{Vol} (x) \, \mathrm{d} \mathrm{Vol} (y) \\ &= \frac{2 \ell +1}{4 \pi} \left( N- \int_{\mathbb{S}^2} \int_{\mathbb{S}^2} \sum_{\ell_1, \ell_2 =0}^{L} \frac{(2 \ell_1 +1)(2 \ell_2 +1)}{16 \pi^2} P_{\ell_1} (\langle x,y \rangle) P_{\ell_2} (\langle x,y \rangle)  P_{\ell}  (\langle x,y \rangle) \, \mathrm{d} \mathrm{Vol} (x) \, \mathrm{d} \mathrm{Vol} (y) \right)  \\ &= \frac{2 \ell +1}{4 \pi} \left( N- \sum_{\ell_1, \ell_2 =0}^L \frac{(2 \ell_1+1)(2 \ell_2+1)}{2} \int_{-1}^1 P_{\ell_1} (t) P_{\ell_2}(t) P_{\ell}(t) \, \mathrm{d} t \right) . \end{split}
\end{equation}

The fact that the Legendre polynomials form an orthogonal basis in $L^2 ([-1,1])$ means in particular, that the polynomial $P_{\ell_1}(t) P_{\ell}(t)$ can be expanded as a finite sum $P_{\ell_1}(t) P_{\ell}(t) = \sum_{j =0}^{\ell_1 + \ell} a_{j} P_{j} (t)$ with coefficients $a_{j}=\frac{2 j +1}{2} \int_{-1}^1 P_{\ell_1}(t) P_{\ell}(t) P_{j} (t) \, \mathrm{d} t$.  Therefore
\begin{equation*}
 \sum_{\ell_2=0}^{\infty} \frac{2 \ell_2+1}{2} \int_{-1}^{1} P_{\ell_1} (t) P_{\ell_2}(t) P_{\ell}(t) \, \mathrm{d} t = P_{\ell_1} (1) P_{\ell}(1) =1 , 
 \end{equation*}
and consequently
\begin{equation*}
 \sum_{\ell_1=0}^L \sum_{\ell_2=0}^{\infty} \frac{(2 \ell_1 +1)(2 \ell_2 +1)}{2} \int_{-1}^1 P_{\ell_1} (t) P_{\ell_2}(t) P_{\ell}(t) \, \mathrm{d} t = \sum_{\ell_1=0}^L (2 \ell_1 +1) =N . 
 \end{equation*}
As a manifestation of the repulsive nature of the system in Fourier space, the main term $N$ in the variance cancels, and we obtain
\begin{equation}\label{explicitvariance2}
\sum_{m=-\ell}^{\ell} \mathbb{E} \left| \sum_{n=1}^N Y_{\ell}^m (X_n) \right|^2 = \frac{2 \ell +1}{4 \pi} \sum_{\ell_1=0}^L \sum_{\ell_2=L+1}^{\infty} \frac{(2 \ell_1 +1)(2 \ell_2 +1)}{2} \int_{-1}^1 P_{\ell_1} (t) P_{\ell_2}(t) P_{\ell}(t) \, \mathrm{d} t .
\end{equation}
Set $g=(\ell_1+\ell_2+\ell)/2$. The remaining integral vanishes unless $g \ge \ell_1, \ell_2, \ell$, as can be seen from comparing the degree of the product of two polynomials to the third one. Otherwise, we have the explicit formula \cite[p.\ 321]{AAR}
\begin{equation*}
 \frac{1}{2} \int_{-1}^1 P_{\ell_1} (t) P_{\ell_2}(t) P_{\ell}(t) \, \mathrm{d} t = \left\{ \begin{array}{ll} \frac{1}{\ell_1+\ell_2+\ell+1} \cdot \frac{\binom{2g-2\ell_1}{g-\ell_1} \binom{2g-2\ell_2}{g-\ell_2} \binom{2g-2\ell}{g-\ell}}{\binom{2g}{g}} & \textrm{if } g \in \mathbb{Z}, \\ 0 & \textrm{if } g \not\in \mathbb{Z} . \end{array} \right. 
 \end{equation*}

Assume first that $\ell \le L/2$. The central binomial coefficients satisfy the general inequalities $\frac{4^n}{\sqrt{\pi (n+1/2)}} \le \binom{2n}{n} \le \frac{4^n}{\sqrt{\pi (n+1/4)}}$ for all $n \ge 0$. Therefore
\begin{equation*}
 \frac{\binom{2g-2\ell_1}{g-\ell_1} \binom{2g-2\ell_2}{g-\ell_2} \binom{2g-2\ell}{g-\ell}}{\binom{2g}{g}} \le \frac{\frac{4^{g-\ell_1}}{\sqrt{\pi (g-\ell_1)}} \cdot \frac{4^{g-\ell_2}}{\sqrt{\pi (g-\ell_2+1/4)}} \cdot \frac{4^{g-\ell}}{\sqrt{\pi (g-\ell )}}}{\frac{4^g}{\sqrt{\pi (g+1/2)}}} = \frac{1}{\pi} \sqrt{\frac{g+1/2}{(g-\ell_1)(g-\ell_2+1/4)(g-\ell)}} , 
 \end{equation*}
which in turn leads to
\begin{equation*}
 \frac{(2 \ell_1 +1) (2 \ell_2+1)}{2} \int_{-1}^1 P_{\ell_1} (t) P_{\ell_2}(t) P_{\ell}(t) \, \mathrm{d} t \le \frac{(2 \ell_1 +1)(2 \ell_2 +1)}{\sqrt{\ell_1+\ell_2+\ell +1}} \cdot \frac{1}{\pi \sqrt{2 (g-\ell_1)(g-\ell_2+1/4)(g-\ell)}} . 
 \end{equation*}
By the assumptions $\ell \le L/2$ and $\ell_2 \ge L+1$, here
\begin{equation*}
 \frac{\sqrt{2\ell_2 +1}}{\sqrt{\ell_1 + \ell_2 + \ell +1}} \le \sqrt{2} \qquad \textrm{and} \qquad \frac{\sqrt{2 \ell_2 +1}}{\sqrt{g-\ell}} \le 2 \sqrt{2} , 
 \end{equation*}
hence
\begin{equation*}
 \frac{(2 \ell_1 +1) (2 \ell_2+1)}{2} \int_{-1}^1 P_{\ell_1} (t) P_{\ell_2}(t) P_{\ell}(t) \, \mathrm{d} t \le \frac{4(2L+1)}{\pi \sqrt{2 (g-\ell_1)(g-\ell_2+1/4)}} . 
 \end{equation*}
Recall that in \eqref{explicitvariance2} we are summing over $0 \le \ell_1 \le L$ and $\ell_2 \ge L+1$, and that for all nonvanishing terms, $\ell_2-\ell_1 \le \ell$. The number of terms for which $\ell_2 - \ell_1 =j$ is exactly $j$, and in this case $g-\ell_1 = (\ell +j)/2$ and $g-\ell_2+1/4 =(\ell -j)/2+1/4$. Therefore
\begin{equation*}
 \sum_{\ell_1=0}^L \sum_{\ell_2=L+1}^{\infty} \frac{(2 \ell_1 +1)(2 \ell_2 +1)}{2} \int_{-1}^1 P_{\ell_1} (t) P_{\ell_2}(t) P_{\ell}(t) \, \mathrm{d} t \le \frac{4 \sqrt{2} (2L+1)}{\pi} \sum_{j=1}^{\ell} \frac{j}{\sqrt{(\ell +j)(\ell -j +1/2)}} . 
 \end{equation*}
Here
\begin{equation*}
  \sum_{j=1}^{\ell} \frac{j}{\sqrt{(\ell +j)(\ell -j +1/2)}} \le \sqrt{\ell} +  \sum_{j=1}^{\ell -1} \frac{j}{\sqrt{\ell^2-j^2}} \le \sqrt{\ell} + \ell \int_0^1 \frac{x}{\sqrt{1-x^2}} \, \mathrm{d} x = \sqrt{\ell} + \ell ,
   \end{equation*}
and \eqref{explicitvariance2} finally yields
\begin{equation*}
 \sum_{m=-\ell}^{\ell} \mathbb{E} \left| \sum_{n=1}^N Y_{\ell}^m (X_n) \right|^2 \le \frac{2\ell +1}{4 \pi} \cdot \frac{4 \sqrt{2}(2L+1)}{\pi} ( \sqrt{\ell} + \ell ) .  
 \end{equation*}
Observe that $(2 \ell +1) (\sqrt{\ell} + \ell ) \le 3 \ell (\ell +1)$, and $2L+1 \le 2 N^{1/2}$. This finishes the proof in the case $\ell \le L/2$.

If $\ell > L/2$, the proof is much simpler. In this case formula \eqref{explicitvariance1} yields the trivial upper bound (cf.\ uniformly distributed i.i.d.\ points)
\begin{equation*}
 \sum_{m=-\ell}^{\ell} \mathbb{E} \left| \sum_{n=1}^N Y_{\ell}^m (X_n) \right|^2 \le \frac{(2 \ell +1) N}{4 \pi} , 
 \end{equation*}
which suffices for the claim of the lemma.
\end{proof}

\begin{proof}[Proof of Theorem \ref{thm: 2dimWassersteintheorem} (i)] An application of the smoothing inequality \eqref{spheresmoothing}, the triangle inequality for the $L^2$ norm and Lemma \ref{harmonicvariancespherelemma} yield that for any real $t>0$,
\begin{equation*}
 \begin{split} \sqrt{\mathbb{E} W_2^2 (X, \mathrm{Vol}/(4 \pi))} &\le 2^{1/2} t^{1/2} + 2 \left( \sum_{\ell =1}^{\infty} \frac{e^{-\ell (\ell +1)t}}{\ell (\ell +1)} \sum_{m=-\ell}^{\ell} \mathbb{E} \left| \frac{1}{N} \sum_{n=1}^N Y_{\ell}^m (X_n) \right|^2 \right)^{1/2} \\ &\le 2^{1/2} t^{1/2} + \frac{2^{7/4} 3^{1/2}}{\pi N^{3/4}} \left( \sum_{\ell =1}^{\infty} e^{-\ell (\ell +1)t} \right)^{1/2} . \end{split} 
 \end{equation*}
Clearly, $F(x) := \left| \left\{ \ell \in \mathbb{N} \, : \, \ell (\ell +1) \le x \right\} \right| \le x^{1/2}$, which leads to the estimates
\begin{equation}\label{traceestimate}
\sum_{\ell =1}^{\infty} e^{-\ell (\ell +1)t} = \int_{0}^{\infty} e^{-tx} \, \mathrm{d} F(x) = \int_{0}^{\infty} F(x) t e^{-tx} \, \mathrm{d} x \le t \int_{0}^{\infty} x^{1/2} e^{-tx} \, \mathrm{d} x = \frac{\pi^{1/2}}{2 t^{1/2}} ,
\end{equation}
and
\begin{equation*}
 \sqrt{\mathbb{E} W_2^2 (X, \mathrm{Vol}/(4 \pi))} \le 2^{1/2} t^{1/2} + \frac{2^{5/4} 3^{1/2}}{\pi^{3/4} N^{3/4} t^{1/4}} . 
 \end{equation*}
The optimal choice is $t=\frac{3^{2/3}}{2^{1/3} \pi N}$, in which case we finally obtain
\begin{equation*} 
\sqrt{\mathbb{E} W_2^2 (X, \mathrm{Vol}/(4 \pi))} \le \frac{2^{1/3} 3^{4/3}}{\pi^{1/2} N^{1/2}} < \frac{3.08}{N^{1/2}} . 
\end{equation*}
\end{proof}

\subsection{Harmonic ensemble on $\mathbb{T}^2$}

\begin{proof}[Proof of Theorem \ref{thm: 2dimWassersteintheorem} (ii)] Consider the harmonic ensemble $X=X(T,\mathbb{T}^2)=\{ X_1, X_2, \ldots, X_N \}$. The number of points is now $N=(2T+1)^2$. Lemma \ref{harmonicvariancetoruslemma} shows that for any $k \in \mathbb{Z}^2$, $k \neq 0$,
\begin{equation}\label{varianceestimatetorus}
\mathbb{E} \left| \sum_{n=1}^N e^{2 \pi i \langle k,X_n \rangle} \right|^2 \le 2^{1/2}|k| N^{1/2} .
\end{equation}
Indeed, if $|k_1|, |k_2| \le \sqrt{N}$, this follows from
\begin{equation*}
 N-(2T+1-|k_1|)(2T+1-|k_2|) = (2T+1) (|k_1| + |k_2|) - |k_1| \cdot |k_2| \le (2T+1) 2^{1/2}|k|.  
 \end{equation*}
If either $|k_1|>\sqrt{N}$ or $|k_2|>\sqrt{N}$, then $N \le 2^{1/2} |k| N^{1/2}$, and \eqref{varianceestimatetorus} follows once again.

An application of the smoothing inequality \eqref{torussmoothing}, the triangle inequality for the $L^2$ norm and \eqref{varianceestimatetorus} yield that for any real $t>0$,
\begin{equation}\label{smoothingharmonic2dimtorus}
\begin{split} \sqrt{\mathbb{E} W_2^2 (X,\mathrm{Vol})} &\le 2^{1/2} t^{1/2} + 2 \left( \sum_{\substack{k \in \mathbb{Z}^2 \\ k \neq 0}} \frac{e^{-4 \pi^2 |k|^2 t}}{4 \pi^2 |k|^2} \mathbb{E} \left| \frac{1}{N} \sum_{n=1}^N e^{2 \pi i \langle k,X_n \rangle} \right|^2 \right)^{1/2} \\ &\le 2^{1/2} t^{1/2} + \frac{2^{1/4}}{\pi N^{3/4}} \left( \sum_{\substack{k \in \mathbb{Z}^2 \\ k \neq 0}} \frac{e^{-4 \pi^2 |k|^2 t}}{|k|} \right)^{1/2} . \end{split}
\end{equation}
Let $F(x)=| \{ k \in \mathbb{Z}^2 \, : \, 0<|k|^2 \le x \} |$ denote the number of nonzero lattice points in the closed disk centered at the origin of radius $x^{1/2}$.
Let us draw a unit square centered at each of these lattice points. Since these pairwise disjoint squares are all a subset of the disk of radius $x^{1/2} + 2^{-1/2}$, comparing areas shows that
\begin{equation*}
 F(x) \le \pi \left( x^{1/2} + 2^{-1/2} \right)^2 -1 \le \pi x + \left( 2^{1/2} \pi + \frac{\pi}{2} -1 \right) x^{1/2}, \qquad x \in [1,\infty) . 
 \end{equation*}
Clearly, $F(x)=0$ on $[0,1)$. The infinite series in \eqref{smoothingharmonic2dimtorus} can be expressed in terms of $F(x)$ as
\begin{equation*}
 \begin{split} \sum_{\substack{k \in \mathbb{Z}^2 \\ k \neq 0}} \frac{e^{-4 \pi^2 |k|^2 t}}{|k|} = \int_{0}^{\infty} \frac{e^{-4 \pi^2 t x}}{x^{1/2}} \, \mathrm{d} F(x) &= - \int_{0}^{\infty} F(x) \, \mathrm{d} \frac{e^{-4 \pi^2 t x}}{x^{1/2}} \\ &= \int_{1}^{\infty} F(x) \left( \frac{4 \pi^2 t}{x^{1/2}} + \frac{1}{2 x^{3/2}} \right) e^{-4 \pi^2 t x} \, \mathrm{d} x \\ &= \pi t^{1/2} \int_{4 \pi^2 t}^{\infty} F \left( \frac{x}{4 \pi^2 t} \right) \left( \frac{2}{x^{1/2}} + \frac{1}{x^{3/2}} \right) e^{-x} \, \mathrm{d} x. \end{split} 
 \end{equation*}
One readily checks that
\begin{equation*}
 \begin{split} \int_{0}^{\infty} \frac{x}{4 \pi^2 t} \left( \frac{2}{x^{1/2}} + \frac{1}{x^{3/2}} \right) e^{-x} \, \mathrm{d} x &= \frac{1}{2 \pi^{3/2} t}, \\ \int_{4 \pi^2 t}^{\infty} \left( \frac{x}{4 \pi^2 t} \right)^{1/2} \left( \frac{2}{x^{1/2}} + \frac{1}{x^{3/2}} \right) e^{-x} \, \mathrm{d} x &\le \frac{1}{2 \pi t^{1/2}} \left( 2 + \log \frac{1}{4 \pi^2 t} + \frac{1}{e} \right) , \end{split} 
 \end{equation*}
hence
\begin{equation*}
 \sum_{\substack{k \in \mathbb{Z}^2 \\ k \neq 0}} \frac{e^{-4 \pi^2 |k|^2 t}}{|k|} \le \frac{\pi^{1/2}}{2 t^{1/2}} + \left( 2^{1/2} \pi + \frac{\pi}{2} -1 \right) \left( 1+ \frac{1}{2} \log \frac{1}{4 \pi^2 t} +\frac{1}{2e} \right) .
  \end{equation*}
The estimate \eqref{smoothingharmonic2dimtorus} thus simplifies to
\begin{equation*}
  \sqrt{\mathbb{E} W_2^2 (X,\mathrm{Vol})} \le 2^{1/2} t^{1/2} + \frac{2^{1/4}}{\pi N^{3/4}} \left( \frac{\pi^{1/4}}{2^{1/2}t^{1/4}} + \left( 2^{1/2} \pi + \frac{\pi}{2} -1 \right)^{1/2} \left( 1+ \frac{1}{2} \log \frac{1}{4 \pi^2 t} +\frac{1}{2e} \right)^{1/2} \right) . 
  \end{equation*}
The optimal choice is $t=1/(2^{7/3} \pi N)$, which, after some numerical simplification (note that we may assume that $T \ge 1$, i.e.\ $N \ge 9$), yields
\begin{equation*}
 \sqrt{\mathbb{E} W_2^2 (X,\mathrm{Vol})} \le \frac{3}{2^{2/3} \pi^{1/2} N^{1/2}} + \frac{0.7945 (\log N)^{1/2}}{N^{3/4}} \le \frac{1.7462}{N^{1/2}} . 
 \end{equation*}
\end{proof}

\subsection{Spherical ensemble on $\mathbb{S}^2$}

Consider the spherical ensemble $Z= \{ Z_1, Z_2, \ldots, Z_N \}$ on $\mathbb{S}^2$. We start with a variance estimate, and then give the proof of Theorem \ref{thm: 2dimWassersteintheorem} (iii).
\begin{lem}\label{sphericalvariancelemma} For any $\ell >0$,
\begin{equation*}
 \sum_{m=-\ell}^{\ell} \mathbb{E} \left| \sum_{n=1}^N Y_{\ell}^m (Z_n) \right|^2 \le  \frac{\ell (\ell +1) N^{1/2}}{2 \pi} . 
 \end{equation*}
\end{lem}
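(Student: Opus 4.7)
The plan is to apply the variance formula \eqref{varianceformula} with $f(x)=Y_\ell^m(x)$, sum in $m$, and reduce to an exact closed form via the addition formula and Rodrigues' formula for Legendre polynomials; the lemma will then follow from a short algebraic inequality.

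Since $K(x,x)=N/(4\pi)$ is constant by \eqref{sphericalkernel} and $\int_{\mathbb{S}^2} Y_\ell^m \, \mathrm{d}\mathrm{Vol}=0$ for $\ell\ge 1$, the diagonal term in \eqref{varianceformula} contributes $(2\ell+1)N/(4\pi)$ after summation in $m$, and the constant $K(x,x)K(y,y)=N^2/(16\pi^2)$ piece vanishes against $\sum_m Y_\ell^m(x)\overline{Y_\ell^m(y)}$. Inserting the addition formula \eqref{addition} and the explicit $|K(x,y)|^2$ from \eqref{sphericalkernel}, and using rotational invariance to peel off the $x$-integration (producing a factor $\mathrm{Vol}(\mathbb{S}^2)=4\pi$) while reducing the remaining $y$-integration to the zonal variable $t=\langle x,y\rangle$, one finds
\begin{equation*}
\sum_{m=-\ell}^{\ell}\mathbb{E}\Bigl|\sum_{n=1}^{N} Y_\ell^m(Z_n)\Bigr|^2
=\frac{(2\ell+1)N}{4\pi}\Bigl(1-\frac{N}{2}\int_{-1}^{1}\Bigl(\frac{1+t}{2}\Bigr)^{N-1}P_\ell(t)\,\mathrm{d}t\Bigr).
\end{equation*}

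I would then evaluate the one-dimensional integral via Rodrigues' formula $P_\ell(t)=(2^\ell\ell!)^{-1}(\mathrm{d}/\mathrm{d}t)^\ell(t^2-1)^\ell$ and $\ell$-fold integration by parts; the boundary terms vanish since $(t^2-1)^\ell$ has an $\ell$-fold zero at $\pm 1$, and what remains is the Beta integral $\int_{-1}^{1}(1-t)^\ell(1+t)^{N-1}\,\mathrm{d}t=2^{N+\ell}\ell!(N-1)!/(N+\ell)!$. This yields $\int_{-1}^{1}((1+t)/2)^{N-1}P_\ell(t)\,\mathrm{d}t = 2((N-1)!)^2/[(N-\ell-1)!(N+\ell)!]$ for $\ell\le N-1$, and $0$ for $\ell\ge N$ (since then $(\mathrm{d}/\mathrm{d}t)^\ell(1+t)^{N-1}\equiv 0$). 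Rewriting the ratio as the telescoping product $\prod_{j=1}^{\ell}(N-j)/(N+j)$ leads to the closed form
\begin{equation*}
\sum_{m=-\ell}^{\ell}\mathbb{E}\Bigl|\sum_{n=1}^{N}Y_\ell^m(Z_n)\Bigr|^2
=\frac{(2\ell+1)N}{4\pi}\Bigl(1-\prod_{j=1}^{\ell}\frac{N-j}{N+j}\Bigr),
\end{equation*}
valid for every $\ell\ge 1$, with the product understood as $0$ when $\ell\ge N$.

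It then suffices to show $(2\ell+1)\sqrt{N}\,(1-Q_\ell)\le 2\ell(\ell+1)$, where $Q_\ell=\prod_{j=1}^{\ell}(N-j)/(N+j)$. When $\ell\ge\sqrt{N}$, the trivial estimate $1-Q_\ell\le 1$ combined with $\sqrt{N}\le\ell$ gives $(2\ell+1)\sqrt{N}\le(2\ell+1)\ell\le 2\ell(\ell+1)$. When $\ell<\sqrt{N}$, I apply the telescoping inequality $1-\prod_j(1-a_j)\le\sum_j a_j$ with $a_j=2j/(N+j)$ to obtain $1-Q_\ell\le\sum_{j=1}^{\ell}2j/(N+j)$; rewriting this sum as $\ell(\ell+1)/N-\sum_{j=1}^{\ell}2j^2/[N(N+j)]$ and bounding the correction from below by $\ell(\ell+1)(2\ell+1)/[3N(N+\ell)]$ produces precisely the slack needed to absorb the factor $(2\ell+1)/\sqrt{N}$ in the marginal range $\sqrt{N}\in(\ell,\ell+\tfrac12)$, while in the complementary range $\sqrt{N}\ge\ell+\tfrac12$ the crude bound $1-Q_\ell\le\ell(\ell+1)/N$ already suffices. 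The main technical obstacle is the $\ell$-fold integration-by-parts evaluation of the zonal integral; the final inequality is elementary but requires this small refinement at the boundary $\ell\approx\sqrt{N}$.
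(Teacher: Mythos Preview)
Your proof is correct and reaches the same closed-form variance, but by a genuinely different route. Both you and the paper start identically: apply \eqref{varianceformula}, sum in $m$ via the addition formula \eqref{addition}, and arrive at
\[
\sum_{m=-\ell}^{\ell}\mathbb{E}\Bigl|\sum_{n=1}^{N}Y_\ell^m(Z_n)\Bigr|^2
=\frac{(2\ell+1)N}{4\pi}\Bigl(1-\frac{N}{2}\int_{-1}^{1}\Bigl(\tfrac{1+t}{2}\Bigr)^{N-1}P_\ell(t)\,\mathrm{d}t\Bigr).
\]
The paper evaluates the integral by expanding $P_\ell(t)=\sum_{k}(-1)^k\binom{\ell}{k}\binom{\ell+k}{k}\bigl(\tfrac{1-t}{2}\bigr)^k$, obtaining the alternating sum $\sum_{k=0}^\ell(-1)^k\binom{\ell}{k}\binom{\ell+k}{k}/\binom{N+k}{k}$; for small $\ell$ it checks that the terms decrease in absolute value and keeps only the $k=1$ term, and for large $\ell$ it invokes Bonnet's recursion to show the integral is nonnegative. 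You instead use Rodrigues' formula and $\ell$-fold integration by parts to get the equivalent product form $\prod_{j=1}^\ell(N-j)/(N+j)$, and then handle the inequality by the telescoping bound $1-\prod(1-a_j)\le\sum a_j$ with a second-order correction near $\ell\approx\sqrt{N}$. Your product formula has two mild advantages: positivity $Q_\ell\ge 0$ (hence $1-Q_\ell\le 1$) is immediate for the large-$\ell$ regime, avoiding the Bonnet argument, and the closed form is cleaner to state. The paper's alternating-series argument, on the other hand, gives $\frac{\ell(\ell+1)}{N+1}$ directly as the first term, which supplies just enough slack that no boundary refinement is needed. Both arguments are short and elementary; the difference is mainly one of taste.
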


\begin{proof} Recall that the kernel defining the spherical ensemble satisfies \eqref{sphericalkernel}. An application of \eqref{varianceformula} with $f(x)=Y_{\ell}^m (x)$ thus gives
\begin{equation*}
 \mathbb{E} \left| \sum_{n=1}^N Y_{\ell}^m (Z_n) \right|^2 = \frac{N}{4 \pi} - \int_{\mathbb{S}^2} \int_{\mathbb{S}^2} \frac{N^2}{16 \pi^2} \left( \frac{1+\langle x,y \rangle}{2} \right)^{N-1} Y_{\ell}^m (x) \overline{Y_{\ell}^m (y)} \, \mathrm{d} \mathrm{Vol} (x) \, \mathrm{d} \mathrm{Vol} (y),   
 \end{equation*}
and the addition formula \eqref{addition} yields the explicit formula
\begin{equation}\label{sphericalvariance}
\begin{split} \sum_{m=-\ell}^{\ell} \mathbb{E} \left| \sum_{n=1}^N Y_{\ell}^m (Z_n) \right|^2 &= \frac{(2 \ell +1)N}{4 \pi} - \int_{\mathbb{S}^2} \int_{\mathbb{S}^2} \frac{N^2}{16 \pi^2} \left( \frac{1+\langle x,y \rangle}{2} \right)^{N-1} \frac{2 \ell +1}{4 \pi} P_{\ell} (\langle x,y \rangle) \, \mathrm{d} \mathrm{Vol} (x) \,\mathrm{d} \mathrm{Vol} (y) \\ &= \frac{(2\ell +1)N}{4 \pi} \left( 1- \frac{1}{2} \int_{-1}^1 N \left( \frac{1+t}{2} \right)^{N-1} P_{\ell} (t) \, \mathrm{d} t \right) . \end{split}
\end{equation}

Assume first that $\ell \le N^{1/2} + N^{-1/2} - 1/2$. The explicit formula for the Legendre polynomials \cite[p.\ 297]{AAR}
\begin{equation*}
 P_{\ell} (t) = \sum_{k=0}^{\ell} (-1)^k \binom{\ell}{k} \binom{\ell+k}{k} \left( \frac{1-t}{2} \right)^k 
 \end{equation*}
immediately gives
\begin{equation*}
 \frac{1}{2} \int_{-1}^1 N \left( \frac{1+t}{2} \right)^{N-1} P_{\ell} (t) \, \mathrm{d} t = \sum_{k=0}^{\ell} (-1)^k \frac{\binom{\ell}{k} \binom{\ell +k}{k}}{\binom{N+k}{k}} .  
 \end{equation*}
As a manifestation of the repulsive nature of the system in Fourier space, the $k=0$ term cancels the main term in the variance, and we obtain
\begin{equation*}
 \sum_{m=-\ell}^{\ell} \mathbb{E} \left| \sum_{n=1}^N Y_{\ell}^m (Z_n) \right|^2 = \frac{(2 \ell +1)N}{4 \pi} \sum_{k=1}^{\ell} (-1)^{k+1} \frac{\binom{\ell}{k} \binom{\ell +k}{k}}{\binom{N+k}{k}} . 
 \end{equation*}
Observe that the terms in this sum are nonincreasing in absolute value. Indeed,
\begin{equation*}
 \frac{\binom{\ell}{k+1} \binom{\ell +k+1}{k+1}}{\binom{N+k+1}{k+1}} \le \frac{\binom{\ell}{k} \binom{\ell +k}{k}}{\binom{N+k}{k}} \, \Longleftrightarrow \, (\ell -k) (\ell +k+1) \le (k+1) (N+k+1) , 
 \end{equation*}
and the latter inequality holds for all $1 \le k \le \ell -1$, since the assumption $\ell \le N^{1/2}+N^{-1/2}-1/2$ ensures, in particular, that $\ell^2+\ell \le 2N+6$. Keeping only the $k=1$ term thus leads to the upper bound
\begin{equation*}
 \sum_{m=-\ell}^{\ell} \mathbb{E} \left| \sum_{n=1}^N Y_{\ell}^m (Z_n) \right|^2 \le \frac{(2\ell +1)N}{4 \pi} \cdot \frac{\ell (\ell +1)}{N+1} \le \frac{\ell (\ell +1) N^{1/2}}{2 \pi} . 
 \end{equation*}
This finishes the proof in the case $\ell \le N^{1/2} + N^{-1/2} -1/2$.

Assume next that $\ell > N^{1/2} + N^{-1/2} -1/2$. Bonnet's recursion formula for the Legendre polynomials \cite[p.\ 303]{AAR}
\begin{equation*}
 (\ell +1) P_{\ell +1} (t) = (2 \ell +1) t P_{\ell} (t) - \ell P_{\ell -1} (t) 
 \end{equation*}
shows that
\begin{equation*}
 (2 \ell +1) \int_{-1}^1 t^{n+1} P_{\ell} (t) \, \mathrm{d} t = (\ell +1) \int_{-1}^1 t^n P_{\ell +1} (t) \, \mathrm{d}t + \ell \int_{-1}^1 t^n P_{\ell -1} (t) \, \mathrm{d} t .  
 \end{equation*}
One readily checks that $\int_{-1}^1 t^n P_{\ell} (t) \, \mathrm{d} t \ge 0$ for all integers $n,\ell \ge 0$ by induction on $n$. Hence
\begin{equation*}
 \int_{-1}^1 \left( \frac{1+t}{2} \right)^{N-1} P_{\ell} (t) \, \mathrm{d} t \ge 0, 
 \end{equation*}
and \eqref{sphericalvariance} leads to the trivial estimate (cf.\ uniformly distributed i.i.d.\ points)
\begin{equation*}
  \sum_{m=-\ell}^{\ell} \mathbb{E} \left| \sum_{n=1}^N Y_{\ell}^m (Z_n) \right|^2 \le \frac{(2\ell +1)N}{4 \pi} . 
  \end{equation*}
This trivial estimate together with the assumption $\ell > N^{1/2} + N^{-1/2}-1/2$ suffices for the claim of the lemma.
\end{proof}

\begin{proof}[Proof of Theorem \ref{thm: 2dimWassersteintheorem} (iii)] An application of the smoothing inequality \eqref{spheresmoothing}, the triangle inequality for the $L^2$ norm, Lemma \ref{sphericalvariancelemma}, and estimate \eqref{traceestimate} yield that for any real $t>0$,
\begin{equation*}
 \begin{split} \sqrt{\mathbb{E} W_2^2 (Z,\mathrm{Vol}/(4 \pi))} &\le 2^{1/2} t^{1/2} + 2 \left( \sum_{\ell =1}^{\infty} \frac{e^{-\ell (\ell +1)t}}{\ell (\ell +1)} \sum_{m=-\ell}^{\ell} \mathbb{E} \left| \frac{1}{N} \sum_{n=1}^N Y_{\ell}^m (Z_n) \right|^2 \right)^{1/2} \\ &\le 2^{1/2} t^{1/2} + \frac{2^{1/2}}{\pi^{1/2}N^{3/4}} \left( \sum_{\ell =1}^{\infty} e^{-\ell (\ell +1)t} \right)^{1/2} \\ &\le 2^{1/2} t^{1/2} + \frac{1}{\pi^{1/4} N^{3/4}t^{1/4}} . \end{split} 
 \end{equation*}
The optimal choice is $t=1/(4 \pi^{1/3} N)$, and we finally obtain
\begin{equation*}
 \sqrt{\mathbb{E} W_2^2 (Z,\mathrm{Vol}/(4 \pi))} \le \frac{3}{2^{1/2} \pi^{1/6} N^{1/2}} < \frac{1.7529}{N^{1/2}} . 
 \end{equation*}
\end{proof}

\subsection{One-dimensional torus}\label{sec:onedimtorus}

Using a connection between the Wasserstein metric and negative Sobolev norms, Graham \cite[Remark 7]{GR} established an explicit formula for $W_2$ on the $1$-dimensional torus: for any finite set $A_N = \{ a_1, a_2, \ldots, a_N \} \subset \mathbb{T}$,
\begin{equation*}
 W_2^2 (A_N,\mathrm{Vol}) = \sum_{\substack{k \in \mathbb{Z}\\ k \neq 0}} \frac{1}{4 \pi^2 k^2} \left| \frac{1}{N} \sum_{n=1}^N e^{2 \pi i k a_n} \right|^2 . 
 \end{equation*}
We emphasize that this formula holds only in dimension $d=1$, and only for the distance from the Riemannian volume. In light of formula \eqref{L2discrepformula}, the distance from uniformity in $W_2$, the periodic $L^2$-discrepancy and the diaphony are thus identical up to normalizing constants. In particular, $W_2(A_N,\mathrm{Vol}) = 2^{-1/2} D_{\mathrm{per},2}(A_N)$.
\begin{proof}[Proof of Theorem \ref{thm:1dimWasserstein}] The claim now immediately follows from formula \eqref{onedimL2discrep}.
\end{proof}

\section*{Acknowledgments}

Bence Borda was supported by the Austrian Science Fund (FWF) project M 3260-N. Peter Grabner and Ryan Matzke were supported by the Austrian Science Fund FWF project F5503 part of the Special Research Program (SFB) ``Quasi-Monte Carlo Methods: Theory and Applications.'' Ryan Matzke was also supported by the NSF Postdoctoral Fellowship Grant 2202877. This work was finished during a visit of Peter Grabner to Vanderbilt University, which was funded by Doug Hardin. He would like to extend his gratitude for the hospitality. The authors would also like to thank Carlos Beltr\'{a}n for his helpful communications, as well as the two anonymous referees for their helpful comments and suggestions.

\end{document}